\newcommand{\cC}{\mathcal{C}}
\newcommand{\cE}{\mathcal{E}}
\newcommand{\cW}{\mathcal{W}}
\newcommand{\bD}{\mathbb{D}}
\newcommand{\bQ}{\mathbb{Q}}
\newcommand{\D}{\text{Diff}}
\newcommand{\Co}{\text{Cont}}
\newcommand{\AC}{\text{AlmCont}}
\newtheorem{theorem}{Theorem}[section]
\newtheorem*{thma}{Theorem A}
\newtheorem*{thmb}{Theorem B}
\newtheorem{lemma}[theorem]{Lemma}
\newtheorem{corollary}[theorem]{Corollary}
\newtheorem*{corollary*}{Corollary}
\theoremstyle{definition}
\newtheorem{definition}[theorem]{Definition}
\theoremstyle{remark}
\newtheorem{remark}[theorem]{Remark}
\newtheorem*{quest}{Question}
\title{Relative H-Principle and Contact Geometry}
\author{Jacob Taylor}
\address{Department of Mathematics, University of Toronto, 27 King's College Circle, Toronto, ON M5S 1A1, Canada}
\email{jacobw.taylor@mail.utoronto.ca}
\date{October 6, 2022}
\begin{document}
	
\begin{abstract} We show that if \(F(M)\) is some space of holonomic solutions with space of formal solutions \(F^f(M)\) that satisfies a certain relative \(h\)-principle, then the non-relative map \(F(M) \to F^f(M)\) admits a section up to homotopy. We apply this to the relative \(h\)-principle for overtwisted contact structures proved by Borman-Eliashberg-Murphy to find infinite cyclic subgroups in the homotopy groups of the contactomorphism group of \(M\).
\end{abstract}

\maketitle

\section{Introduction}
\label{sec:one}

In 1969, Gromov showed in \cite{Gromov} that if \(M\) is an open manifold then the inclusion \(\Co(M) \to \AC(M)\) is a weak equivalence, where \(\Co(M)\) is the space of contact structures on \(M\) and \(\AC(M)\) is the space of almost contact structures on \(M\). The case of closed manifolds is not so simple. For example, there exist contact structures on closed 3-manifolds that are formally homotopic but not homotopic, see \cite{Bennequin}. In \cite{Borman}, Matthew Borman, Yakov Eliashberg and Emmy Murphy advanced the field of contact geometry by first extending the definition of an overtwisted contact manifold from \(3\)-dimensional manifolds to all manifolds of dimension \(2n+1 \geq 3\), and then proving an \(h\)-principle result for overtwisted contact manifolds. Essentially, an overtwisted contact manifold is a contact manifold \(M\) that contains an embedded overtwisted disk, i.e. an embedded \(2n\)-disk \(\Delta\) with a certain model germ of a contact structure on a neighborhood of \(\Delta\) (see \cite{Borman} Definition 3.6). If \(\Co^{OT}(M,\Delta)\), \(\AC(M,\Delta)\) denote the spaces of contact and formal contact structures that are overtwisted with fixed disk \(\Delta\) respectively, then the main result of \cite{Borman} is that 
\[\Co^{OT}(M,\Delta) \to \AC(M,\Delta)\]
is a weak equivalence. However, it is known that in general the map \(\Co^{OT}(M) \to  \AC(M)\) from overtwisted contact structures to almost contact structures is not a weak equivalence, see for example \cite{Vogel}. From this, one may wonder how much can be known about the maps \(\Co^{OT}(M) \to \AC(M)\) and \(\Co(M) \to \AC(M)\) given that there is an \(h\)-principle when one fixes a disk? In fact, there is a much more general question here about relative \(h\)-principles, motivated by this example.

\begin{quest}
Let \(\Delta\) be some subset of \(M\) and \(\gamma\) be the germ of some holonomic solution on \(\Delta\).  Let \(F(M \text{ rel } (\Delta, \gamma))\) denote the set of all holonomic solutions that have germ \(\gamma\) on \(\Delta\), and \(F^f(M \text{ rel } (\Delta, \gamma))\) denote the set of formal solutions that have germ \(\gamma\) on \(\Delta\). If the map 
\[F(M \text{ rel } (\Delta, \gamma)) \to F^f(M \text{ rel } (\Delta, \gamma))\]
is a weak equivalence for all pairs \((\Delta,\gamma) \in \cW\) for some collection \(\cW\), what can be said about the map
\[F(M) \to F^f(M) \text{?}\]
\end{quest}

One of the main results of this paper is an answer to this question.
\noindent\phantomsection\label{Main}
\begin{thma}
Let \(\cW\) be a {\em sufficiently separated collection} (see Definition \ref{Sufficiently-Separated-Collection}). If the natural inclusion map \(F(M \text{ rel } (\Delta_1, \gamma_1), \dots, (\Delta_k,\gamma_k)) \to F^f(M \text{ rel } (\Delta_1,\gamma_1), \dots, (\Delta_k,\gamma_k))\) is a weak equivalence for all finite tuples of disjoint elements \(\{(\Delta_i,\gamma_i)\}_{i=1}^k \in \cW\), \(k \geq 1\), then there exists a space \(X\) and a map \(X \to F(M)\) such that the following diagram commutes:

\[\begin{tikzcd}
X \arrow[r] \arrow[rd, "\simeq"'] & F(M) \arrow[d] \\
                                 & F^f(M)        
\end{tikzcd}\]
where the map \(X \to F^f(M)\) is a weak equivalence. 
\end{thma}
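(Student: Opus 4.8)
The plan is to build $X$ as a homotopy colimit of the relative holonomic solution spaces, indexed over the collections of disjoint elements of $\cW$, and to compare it to $F^f(M)$ through the analogous homotopy colimit on the formal side. Let $P$ be the poset whose objects are the finite nonempty subcollections $S\subseteq\cW$ whose members $(\Delta_i,\gamma_i)$ are pairwise disjoint, ordered by inclusion. For $S\subseteq S'$, forgetting the constraints coming from the extra disks gives inclusions $F(M\text{ rel }S')\hookrightarrow F(M\text{ rel }S)$ and $F^f(M\text{ rel }S')\hookrightarrow F^f(M\text{ rel }S)$, so $S\mapsto F(M\text{ rel }S)$ and $S\mapsto F^f(M\text{ rel }S)$ are functors $P^{\mathrm{op}}\to\mathrm{Top}$ joined by the natural holonomic-into-formal inclusion, and each maps compatibly to the constant functor with value $F(M)$, respectively $F^f(M)$, since the inclusions $F(M\text{ rel }S)\hookrightarrow F(M)$ are compatible with the forgetful maps among different $S$. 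Set
\[ X\;:=\;\operatorname*{hocolim}_{S\in P^{\mathrm{op}}}F(M\text{ rel }S). \]
This comes equipped with maps $X\to F(M)$ and $X\to F^f(M)$, and, because $F(M\text{ rel }S)\hookrightarrow F(M)\hookrightarrow F^f(M)$ coincides with $F(M\text{ rel }S)\hookrightarrow F^f(M\text{ rel }S)\hookrightarrow F^f(M)$ for every $S$, the composite $X\to F(M)\to F^f(M)$ equals $X\to F^f(M)$. So the diagram of the statement commutes, and it remains only to prove that $X\to F^f(M)$ is a weak equivalence.

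I would factor that map as $X\to\operatorname*{hocolim}_{P^{\mathrm{op}}}F^f(M\text{ rel }S)\to F^f(M)$ and handle the two pieces separately. The first is immediate: the hypothesis is exactly that $F(M\text{ rel }S)\to F^f(M\text{ rel }S)$ is a weak equivalence for every $S\in P$ (these being precisely the finite nonempty disjoint collections), and since homotopy colimits take objectwise weak equivalences of diagrams to weak equivalences, $X=\operatorname*{hocolim}_{P^{\mathrm{op}}}F(M\text{ rel }S)\to\operatorname*{hocolim}_{P^{\mathrm{op}}}F^f(M\text{ rel }S)$ is a weak equivalence. All of the content is therefore in the second map, and this is where the hypothesis that $\cW$ is sufficiently separated has to be used.

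To analyze it, regard each $F^f(M\text{ rel }S)$ as the subspace of $F^f(M)$ consisting of the formal solutions that are holonomic with germ $\gamma_i$ near $\Delta_i$ for all $i$. Because this is a condition imposed independently disk by disk, $F^f(M\text{ rel }(S\cup S'))=F^f(M\text{ rel }S)\cap F^f(M\text{ rel }S')$ whenever $S\cup S'$ is again a disjoint collection; hence the family $\{F^f(M\text{ rel }w)\}_{w\in\cW}$ behaves like a cover of its union in $F^f(M)$, with the multi-disk spaces $F^f(M\text{ rel }S)$ as its intersections, and the homotopy colimit over $P^{\mathrm{op}}$ is exactly the nerve of this cover. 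A nerve-type comparison then reduces the statement to showing that this cover is homotopy-surjective onto $F^f(M)$, i.e.\ that $\operatorname*{hocolim}_{P^{\mathrm{op}}}F^f(M\text{ rel }S)\to F^f(M)$ is a bijection on all homotopy groups.

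This last point is the main obstacle, and it is precisely what the notion of a sufficiently separated collection is designed to provide. Given $f\colon K\to F^f(M)$ with $K$ a finite complex (a sphere to prove surjectivity on $\pi_n$, a disk relative to its boundary to prove injectivity), subdivide $K$ into small cells; over each cell the local-flexibility part of the hypothesis lets one deform $f$, through a homotopy supported near a single element $w_j\in\cW$, to a family that is holonomic with the prescribed germ near $w_j$; the separation (disjointifiability) part then lets one perturb the finitely many disks $w_j$ so that those attached to cells that meet are pairwise disjoint, whereupon the restriction of $f$ over a cell intersection lies in the corresponding multi-disk subspace $F^f(M\text{ rel }S)$. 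Assembling these compatible cellwise deformations produces a map $K\to\operatorname*{hocolim}_{P^{\mathrm{op}}}F^f(M\text{ rel }S)$ lifting $f$ up to homotopy, which gives the required bijectivity on homotopy groups. I expect the genuine difficulty to lie in organizing the cellwise deformations so that they agree on overlaps --- the coherence of the patching --- rather than in the subsidiary point that the $F^f(M\text{ rel }S)$ are not literally open in $F^f(M)$, which one handles by running the nerve comparison on the evident open thickenings (formal solutions agreeing to high order with $\gamma_i$ on a neighborhood of $\Delta_i$) and checking these deformation retract onto the honest relative spaces.
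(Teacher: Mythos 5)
Your overall architecture matches the paper's: resolve $F(M)$ and $F^f(M)$ by their relative versions (the paper uses a semisimplicial space $Y_p=\coprod F^f(M\text{ rel }\Delta_0,\dots,\Delta_p)$ indexed by disjoint tuples rather than a hocolim over the poset of disjoint subcollections, but these are interchangeable), observe that the hypothesis makes the comparison of the two resolutions an objectwise weak equivalence, and reduce everything to the augmentation map $\operatorname{hocolim}_S F^f(M\text{ rel }S)\to F^f(M)$ being a weak equivalence. That reduction is correct and is exactly how the paper proceeds. The problem is that you have not proved the augmentation statement, and this is where all the content of the theorem lives --- you say so yourself, flagging ``the coherence of the patching'' as the genuine difficulty and leaving it unresolved. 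As written, that is a gap, not a proof. Moreover, the strategy you sketch for it (subdivide the parametrizing complex $K$ into cells, attach a different element of $\cW$ to each cell, and arrange disjointness for cells that meet) is harder than necessary and would indeed require a careful inductive ordering of the cells to make the cellwise deformations and the disjointness conditions compatible; in the paper that kind of argument is only needed later, for the variant in which the disks are allowed to move continuously (Lemma \ref{Continuous-Section-Space-Weak-Equivalence}).

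For the statement at hand the paper's argument (Theorem \ref{Section-Space-Weak-Equivalence}) is global and avoids patching altogether, and you should be aware of why it works: in the resolution the disks attached to a point of the realization are \emph{fixed} subsets of $M$, not moving embeddings, so by compactness the boundary datum $S^{n-1}\to\operatorname{hocolim}_S F^f(M\text{ rel }S)$ involves only \emph{finitely many} elements of $\cW$ in total. Condition (2) of Definition \ref{Sufficiently-Separated-Collection} then supplies a \emph{single} new element $(\Delta,\gamma)$ sitting inside a ball $D$ disjoint from all of them; one homotopes the given family of formal solutions, supported in $\bD^n_{3/4}\times D$, so that it equals $\gamma$ on $\bD^n_{3/4}\times\Delta$ --- this uses the standing assumption that $F^f(M)$ is the section space of a bundle with \emph{path-connected} fibers together with the cofibration condition (1) and the homotopy extension property, none of which appear in your sketch (your appeal to a ``local-flexibility part of the hypothesis'' is misplaced: the hypothesis of the theorem concerns the holonomic-to-formal comparison and plays no role here) --- and then writes down the lift explicitly by interpolating the simplicial weights between the new disk on the inner ball and the boundary's disks on the outer annulus. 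Your closing remark about replacing the $F^f(M\text{ rel }S)$ by open thickenings is likewise a red herring: no Čech-nerve theorem for open covers is being invoked, and the weak equivalence of the augmentation must be, and is, proved directly by the lifting argument just described.
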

In other words, the map from holonomic to formal solutions admits a section up to homotopy. This has some immediate consequences in contact geometry, as the above theorem allows us to find a subgroup of \(\pi_k\Co^{OT}(M)\) isomorphic to \(\pi_k\AC(M)\), induced by a map of spaces, for all \(k\). This is an improvement on the current tool used to analyze the difference between the homotopy groups of \(\Co^{OT}(M)\) and \(\AC(M)\), the {\em overtwisted group} (see \cite{Casals} Proposition 1 or \cite{Fernandez-Gironella} Definition \(10\)), which only allows one to realize \(\pi_k\AC(M)\) as a subgroup of \(\pi_k\Co^{OT}(M)\) when \(1 \leq k \leq 2n\). Furthermore, we show that these subgroups agree when \(1 \leq k \leq 2n\). Finally, we use these results to help study certain homotopy groups of the contactomorphism group of an overtwisted contact manifold. Let \(\cC_0(M,\xi_{OT})\) denote the identity component of the space of contactomorphisms of the contact manifold \((M,\xi_{OT})\).
\noindent\phantomsection\label{Second-Main}
\begin{thmb}
If \((M,\xi_{OT})\) is a closed, cooriented, overtwisted contact manifold of dimension \(2n+1\), then \(\pi_k\cC_0(M,\xi_{OT})\) contains an infinite cyclic subgroup whenever \(\pi_k\D_\partial(\bD^{2n+1}) \otimes \bQ \neq 0\), for \(k \leq  \phi^\bQ(\bD^{2n})-1\), \(k \neq 0\)
\end{thmb}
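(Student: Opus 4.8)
The plan is to localise near an overtwisted ball, extract a class from the long exact sequence of a Gray--Moser fibration, and push it forward to $M$. Choose an embedded closed ball $B\subset M$ with $B\cong\bD^{2n+1}$ and $\Delta\subset B$, so that $\xi_{OT}|_B$ is overtwisted relative to a fixed contact germ on a collar of $\partial B$. Parametrised Gray stability gives a locally trivial fibration
\[
\cC_\partial(B,\xi_{OT}|_B)\ \longrightarrow\ \D_\partial(B)\ \xrightarrow{\ \rho\ }\ \Co^{OT}_\partial(B),
\]
where $\Co^{OT}_\partial(B)$ is the path component of $\xi_{OT}|_B$ among overtwisted contact structures on $B$ with the fixed collar germ, $\rho(g)=g_*(\xi_{OT}|_B)$, and the fibre is the group of contactomorphisms supported away from the collar. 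Extension by the identity maps this sequence compatibly into $\cC_0(M,\xi_{OT})\to\D_0(M)\to\Co^{OT}_0(M)$, so for $k\ge1$ there are induced maps $\pi_k\cC_\partial(B,\xi_{OT}|_B)\to\pi_k\cC_0(M,\xi_{OT})$ and $\pi_k\D_\partial(\bD^{2n+1})\cong\pi_k\D_\partial(B)\to\pi_k\D_0(M)$. It therefore suffices to (i) produce $\beta\in\pi_k\cC_\partial(B,\xi_{OT}|_B)$ whose image in $\pi_k\D_\partial(\bD^{2n+1})$ is a nonzero multiple of the Farrell--Hsiang generator $\lambda$, and (ii) check that the image of $\lambda$ in $\pi_k\D_0(M)$ still has infinite order; then the image of $\beta$ in $\pi_k\cC_0(M,\xi_{OT})$ has infinite order and generates the required $\bZ$.

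For (i) it is enough to show $\rho_*\otimes\bQ=0$ in degree $k$, for then $\ker(\rho_*\otimes\bQ)\supseteq\langle\lambda\rangle\otimes\bQ$ and exactness of the rational long exact sequence produces $\beta$. Composing $\rho$ with the forgetful map $\Co^{OT}_\partial(B)\to\AC_\partial(B)$ to formal solutions yields the formal orbit map $g\mapsto g_*\xi^f$, which depends only on the $1$-jet of $g$ and hence factors as
\[
\pi_k\D_\partial(\bD^{2n+1})\ \xrightarrow{\ \mathrm d\ }\ \pi_{k+2n+1}\,SO(2n+1)\ \longrightarrow\ \pi_{k+2n+1}\bigl(SO(2n+1)/U(n)\bigr)=\pi_k\AC_\partial(\bD^{2n+1}).
\]
By Farrell--Hsiang, $\pi_k\D_\partial(\bD^{2n+1})\otimes\bQ\ne0$ forces $k\equiv3\pmod 4$; then $k+2n+1$ is even, hence not $\equiv3\pmod 4$, and since $SO(2n+1)$ is rationally a product of spheres of dimensions $\equiv3\pmod 4$ we get $\pi_{k+2n+1}SO(2n+1)\otimes\bQ=0$. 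So the formal orbit map is rationally zero in degree $k$. To upgrade this to $\rho_*\otimes\bQ=0$ I invoke Theorem A with ambient manifold $\bD^{2n+1}$ relative to the collar germ and with $\cW$ the collection of overtwisted disks: it makes the forgetful map $\Co^{OT}_\partial(\bD^{2n+1})\to\AC_\partial(\bD^{2n+1})$ split surjective on homotopy, and a finer analysis of the space of overtwisted disks shows it is in fact a rational equivalence in degrees $k\le\phi^\bQ(\bD^{2n})-1$ --- reparametrisations of the overtwisted $2n$-disk form $\D_\partial(\bD^{2n})$, which is why the rational stable range $\phi^\bQ(\bD^{2n})$, and the dimension $2n$ rather than $2n+1$, appear in the hypothesis. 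Rational injectivity of the forgetful map in degree $k$ then gives $\rho_*\otimes\bQ=0$.

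For (ii), the class $\lambda$ is detected by a rational invariant supported in the ball --- a higher Franz--Reidemeister torsion, equivalently a Borel class coming from $K(\bZ)$ --- which is not destroyed by extension by the identity into the closed manifold $M$, so its image in $\pi_k\D_0(M)\otimes\bQ$ is nonzero (the excluded case $k=0$ would involve $\pi_0$, where ``infinite cyclic subgroup'' is not the right notion). Assembling (i) and (ii) yields the infinite cyclic subgroup of $\pi_k\cC_0(M,\xi_{OT})$. I expect (i) to be the main obstacle: Theorem A only produces a homotopy section, so the substance is in showing that its non-formal cokernel is \emph{rationally} trivial in exactly the range $k\le\phi^\bQ(\bD^{2n})-1$, which demands controlling the homotopy type of the space of overtwisted disks in $\bD^{2n+1}$ and identifying it, via the Borman--Eliashberg--Murphy $h$-principle, with a formal model whose rational homotopy is governed by the concordance stable range of $\bD^{2n}$.
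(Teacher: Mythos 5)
Your overall strategy (find disk-diffeomorphism classes that act rationally trivially on contact structures, then extract classes of $\cC_0$ from the long exact sequence of the Gray--Moser fibration) is the same as the paper's, and your computation that the \emph{formal} orbit map is rationally zero matches the paper's use of the fact that the derivative map $\D_\partial(\bD^{2n+1})\to\Omega^{2n+1}SO(2n+1)$ is rationally trivial. But your step (i) has a genuine gap at exactly the point you yourself flag as ``the main obstacle.'' To pass from rational triviality of the formal orbit map to rational triviality of the genuine orbit map $\rho$, you assert that $\Co^{OT}_\partial(\bD^{2n+1})\to\AC_\partial(\bD^{2n+1})$ is a rational equivalence in degrees $k\le\phi^\bQ(\bD^{2n})-1$, to be supplied by ``a finer analysis of the space of overtwisted disks.'' Theorem \hyperref[Main]{A} only produces a section up to homotopy, i.e.\ surjectivity of the forgetful map on homotopy groups; it gives no injectivity, and injectivity of $\Co^{OT}\to\AC$ fails in general (this is the point of the Vogel-type examples recalled in the introduction). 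Nothing in your proposal substantiates the needed rational injectivity, and your explanation of where the range comes from (reparametrisations of the overtwisted $2n$-disk forming $\D_\partial(\bD^{2n})$) does not correspond to anything in the actual argument.

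The paper closes this gap by a different mechanism, which is the one essential idea you are missing (Lemma \ref{Disk-Diffeomorphisms-Preserve-Contact-Structures}): a diffeomorphism of $M$ supported in a ball $D$ disjoint from a fixed overtwisted disk $\Delta$ carries $\xi_{OT}$ to a contact structure for which $\Delta$ is \emph{still} an overtwisted disk, so the orbit map $\D_\partial(\bD^{2n+1})\to\Co(M)$ factors through the semi-simplicial resolution $||X_\bullet||$ via $f\mapsto(f^\star\xi_{OT},\Delta,1)$. Since $||X_\bullet||\to\AC(M)$ is a weak equivalence (Theorem \ref{Improved-Contact-Geometry}), any class in the image of $\pi_k\D_\partial(\bD^{2n+1})$ that dies rationally in $\pi_k\AC(M)$ already dies in $\pi_k||X_\bullet||$, hence in $\pi_k\Co(M)$; no injectivity of $\Co^{OT}\to\AC$ on all of its homotopy is required. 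Separately, your step (ii) cannot rest on a heuristic about torsion invariants surviving extension by the identity: it is precisely the cited theorem of Ebert that $\pi_k\D_\partial(\bD^{2n+1})\otimes\bQ\to\pi_k\D_0(M)\otimes\bQ$ is injective for $k\le\phi^\bQ(\bD^{2n})-1$, $k\ne0$, and this theorem --- not your step (i) --- is where the range in the statement actually enters.
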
 Here \(\phi^\bQ(\bD^{2n})\) is the rational concordance stable range for \(\bD^{2n}\), see for example \cite{Goodwillie}.

\subsection*{Overview of the paper}

In \hyperref[sec:two]{Section Two}, we show that for a fiber bundle \(E \to M\) with space of continuous sections \(\Gamma(E)\), if \(\cW\) is a {\em sufficiently separated collection} of pairs \((\Delta,\gamma)\) for \(E\) (see Definition \ref{Sufficiently-Separated-Collection}) then the space \(\Gamma(E)\) is weak equivalent to a certain simplicial space built from the spaces of relative sections. Then in \hyperref[sec:three]{Section Three}, we use this result to prove Theorem \hyperref[Main]{A}. In \hyperref[sec:four]{Section Four}, we show that the collection of all overtwisted disks on a closed, connected manifold is sufficiently separated, and so the map \(\Co^{OT}(M) \to \text{AlmCont}(M)\) admits a section up to homotopy. Then we use this to show that in the range it is defined, the usual overtwisted group agrees with the image of the map induced by this section. In \hyperref[sec:five]{Section Five} we use these results to find infinite cyclic subgroups in the homotopy groups of the contactomorphism group of a closed overtwisted contact manifold, in degrees different than those found in \cite{Fernandez-Gironella}. Finally, in \hyperref[sec:six]{Section Six} we note other applications to Theorem \hyperref[Main]{A}, specifically coming from Engel geometry.  

\subsection*{Acknowledgments}
I would like to thank my supervisor Alexander Kupers for suggesting this topic and for his patience, support and guidance throughout this project. This work is partly supported by an Ontario Graduate Scholarship.

\section{Semisimplicial Resolutions of Section Spaces}
\label{sec:two}

For technical reasons, suppose we are in the category of compactly generated spaces. 
Let \(M\) be a \(d\)-dimensional manifold without boundary, and \(X\) be a path connected space. Suppose \(\pi: E \to M\) is a fiber bundle with fiber \(X\).

\begin{definition}\label{Germ}
Let \(A \subset M\) be some subset of \(M\). A {\em germ of a section on} \(A\) is a pair \((\gamma,U)\), where \(U\) is an open neighborhood of \(A\) and \(\gamma\) is a section on \(U\), with the equivalence relation that two germs are the same if they agree on some neighborhood of \(A\). 
\end{definition}
For convenience we usually omit the neighborhood \(U\) and just let \(\gamma\) denote the germ, with the understanding that \(\gamma\) is defined on some arbitarily small neighborhood of \(A\).  
\begin{definition}\label{Sufficiently-Separated-Collection}
A {\em sufficiently separated collection} \(\cW\) for the bundle \(\pi:E \to M\) is a collection of pairs \((\Delta,\gamma)\) of (a) a contractible compact subset \(\Delta \subset M\) and (b) a germ of a section of \(\pi\) near \(\Delta\). These are required to satisfy
\begin{enumerate}[(1)]
\item There exists some neighborhood \(D \cong \bD^d\) such that \(\Delta \subset int(D)\) and the inclusion map \(\iota: \Delta \to D\) is a closed cofibration. 
\item Given any finite collection \((\Delta_i,\gamma_i)_{i=1}^k\) in \(\cW\), there exists some \((\Delta',\gamma') \in \cW\) so that \(\Delta'\) is contained in the interior of a closed ball \(D \cong \bD^d\) that is disjoint from each \(\Delta_i\). 

\end{enumerate}
\end{definition}

We say that two elements \((\Delta_1,\gamma_1)\), \((\Delta_2,\gamma_2)\) of \(\cW\) are {\em disjoint} if \(\Delta_1 \cap \Delta_2 = \emptyset\), and a collection of elements is {\em disjoint} if each pair of elements in the collection are disjoint. For a pair \((\Delta, \gamma)\) in such a collection, let \(\Gamma_g(E, \Delta,\gamma)\) denote the space of sections on \(E\) that have germ \(\gamma\) near \(\Delta\), and let \(\Gamma(E,\Delta,\gamma)\) denote the space of sections \(f\) such that \(f|_\Delta = \gamma|_\Delta\). For convenience, we omit \(\gamma\) when there is no confusion and write \(\Gamma_g(E,\Delta)\) and \(\Gamma(E,\Delta)\) for \(\Gamma_g(E, \Delta,\gamma)\) and \(\Gamma(E,\Delta,\gamma)\) respectively. By definition, \(\Gamma_g(E,\Delta)\) is topologized as the colimit 
\[\Gamma_g(E,\Delta) := \lim_\to \Gamma(E,B_i),\]
Where the \(B_i\) are some neighborhood basis of \(\Delta\). 
Then we have the following lemma, which will allow us to forget about germs and just work with spaces of sections that have fixed values on subsets.
\begin{lemma}\label{Germs-Are-Irrelevant}
The map \(\Gamma_g(E, \Delta) \to \Gamma(E,\Delta) \) given by inclusion is a weak equivalence. 
\end{lemma}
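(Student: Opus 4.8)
The plan is to show that the inclusion $\Gamma_g(E,\Delta) \to \Gamma(E,\Delta)$ is a weak equivalence by exhibiting an explicit homotopy inverse built from a ``spreading out'' construction: given a section that merely agrees with $\gamma$ on $\Delta$ itself, we deform it near $\Delta$ so that it literally equals $\gamma$ on a small neighborhood, using the closed cofibration hypothesis from Definition~\ref{Sufficiently-Separated-Collection}(1). Since the statement is about a colimit $\Gamma_g(E,\Delta) = \operatorname{colim}_i \Gamma(E,B_i)$ over a neighborhood basis $\{B_i\}$ of $\Delta$, I would first reduce to a statement about each pair $(B_i,\Delta)$ and then pass to the colimit, using that weak equivalences are preserved under sequential (filtered) colimits along closed inclusions in compactly generated spaces.

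First I would fix a closed tubular neighborhood $D \cong \bD^d$ as in condition (1), with $\Delta \subset \operatorname{int}(D)$ and $\iota\colon \Delta \hookrightarrow D$ a closed cofibration, so there is a retraction $r\colon D \times [0,1] \to (D\times 0) \cup (\Delta \times [0,1])$. The germ $\gamma$ is defined on some open $U \supseteq \Delta$; shrinking $D$ we may assume $D \subseteq U$, so $\gamma$ is a genuine section over all of $D$. Because the fiber bundle $E \to M$ restricted to the contractible set $D$ is trivial, $E|_D \cong D \times X$, and the space of sections agreeing with $\gamma$ on $\Delta$ is just $\Map((D,\Delta),(X,*))$-like; the cofibration $\Delta \hookrightarrow D$ combined with this triviality is exactly what is needed to run a Borel-type argument. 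Concretely, using the cofibration structure one constructs a continuous family of self-maps of $D$ (or of germ-neighborhoods) that pushes a collar of $\Delta$ out to a fixed neighborhood $B_0$, and precomposing a section $f$ with the bundle automorphism covering this family produces a section that equals $\gamma$ on $B_0$ whenever $f$ equalled $\gamma$ on $\Delta$ --- and this whole construction is continuous in $f$ and is the identity up to homotopy. This gives a map $\Gamma(E,\Delta) \to \Gamma_g(E,\Delta)$ and, tracing through, a homotopy showing the composites are homotopic to the respective identities.

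The technical heart, and the step I expect to be the main obstacle, is making the ``push-off'' deformation genuinely continuous as a map of section spaces and checking it is well defined into the colimit $\Gamma_g(E,\Delta)$ rather than into some single $\Gamma(E,B_i)$: one must verify that the constructed section really does have a neighborhood on which it agrees with $\gamma$, and that the choice of that neighborhood can be made uniformly (or at least continuously) over compact families of input sections, which is where compact generation and the closed-cofibration hypothesis both get used. An alternative, perhaps cleaner, route is to argue on homotopy groups directly: given a map $S^k \to \Gamma(E,\Delta)$, i.e.\ a compact family of sections each equal to $\gamma$ on $\Delta$, use compactness of $S^k$ together with the cofibration to find a single neighborhood $B$ of $\Delta$ and a homotopy of the whole family (rel $\Delta$, within $\Gamma(E,\Delta)$) to a family landing in $\Gamma(E,B) \subseteq \Gamma_g(E,\Delta)$; and similarly for the relative version to get injectivity on $\pi_k$. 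This avoids constructing a literal section-space map and only requires a parametrized version of the standard fact that a section defined on $\Delta$ extends to a neighborhood once $\Delta \hookrightarrow D$ is a cofibration and $E|_D$ is trivial.
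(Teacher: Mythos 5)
Your overall skeleton --- factor compact families through a stage $\Gamma(E,B_i)$ of the colimit and then compare that stage with $\Gamma(E,\Delta)$ --- matches the paper's, but your primary construction has a genuine flaw. The ``push-off'' you describe, precomposing $f$ with a self-map $\phi$ of $D$ that collapses a collar of $\Delta$, produces a section that agrees near $\Delta$ with $\gamma\circ\phi$ (equivalently $\gamma\circ\rho$ for a retraction $\rho$ of a neighborhood onto $\Delta$), \emph{not} with $\gamma$ itself; since $\Gamma_g(E,\Delta,\gamma)$ consists of sections whose germ is exactly $\gamma$, the construction does not land in the target. This is not a technicality to be smoothed over: no strict homotopy inverse $\Gamma(E,\Delta)\to\Gamma_g(E,\Delta)$ should be expected, because the neighborhood on which the output agrees with $\gamma$ cannot be chosen continuously over all of $\Gamma(E,\Delta)$; it can only be chosen uniformly over compact families, which is exactly why one argues on relative homotopy groups rather than building an inverse map.

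Your ``alternative, cleaner route'' is much closer to workable, but the step you defer --- deforming a compact family of sections agreeing with $\gamma$ on $\Delta$, rel $\Delta$, into a family agreeing with $\gamma$ on a fixed neighborhood $B$ --- is precisely the content of the lemma, and ``a section defined on $\Delta$ extends to a neighborhood'' is not the relevant standard fact (extension is easy; the issue is deforming a \emph{given} section into the prescribed one near $\Delta$). The paper sidesteps this deformation entirely: after factoring $S^{n-1}\to\Gamma_g(E,\Delta)$ through some stage $\Gamma(E,B_i)$ (Strickland's Lemma 3.6, which is the precise statement you want in place of ``weak equivalences commute with filtered colimits''; note the target $\Gamma(E,\Delta)$ is not itself presented as a colimit), it shows $\Gamma(E,B_i)\to\Gamma(E,\Delta)$ is a weak equivalence by comparing the two restriction fibration sequences $\Gamma(E,A)\to\Gamma(E)\to\Gamma(E|_A)$ for $A=B_i$ and $A=\Delta$, using that both $B_i$ and $\Delta$ are contractible so that $\Gamma(E|_{B_i})\to\Gamma(E|_{\Delta})$ is a weak equivalence (and that the inclusions are cofibrations, so the restrictions are fibrations). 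If you want to salvage your deformation argument, that fibration property is also the tool you would need to produce the required homotopy.
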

\begin{proof}
To show this, one can show that given the following diagram
\[\begin{tikzcd}
S^{n-1} \arrow[r] \arrow[d] & {\Gamma_g(E,\Delta)} \arrow[d] \\
\mathbb{D}^n \arrow[r]      & {\Gamma(E,\Delta)}            
\end{tikzcd}\]
there exists a lift \(\beta:\bD^n \to \Gamma_g(E,\Delta)\) of \(\alpha\) up to homotopy relative to the boundary.
By Lemma \(3.6\) of \cite{Strickland}, any map from a compact space to a colimit of closed inclusions factors over one of the inclusions, so \(S^{n-1} \to \Gamma_g(E,\Delta)\) factors as \(S^{n-1} \to \Gamma(E,B_i) \to \Gamma_g(E,\Delta)\) for some neighborhood \(B_i\). We can choose \(B_i\) to be contractible and such that the inclusion \(B_i \to M\) is a closed cofibration. Then it suffices to show that  \(\Gamma(E,B_i) \to \Gamma(E,\Delta)\) is a weak equivalence, which holds due to the following commutative diagram; The rows are fiber sequences and \(\Gamma(E|_{B_i}) \to \Gamma(E|_{\Delta})\) is a weak equivalence since both \(\Delta\) and \(B_i\) are contractible.
\[\begin{tikzcd}
{\Gamma(E,B_i)} \arrow[d, "\iota"'] \arrow[r] & \Gamma(E) \arrow[d, "id"'] \arrow[r] & \Gamma(E|_{B_i}) \arrow[d, "res"'] \\
{\Gamma(E,\Delta)} \arrow[r]                  & \Gamma(E) \arrow[r]                  & \Gamma(E|_{\Delta})               
\end{tikzcd},
\]

\end{proof}
\begin{remark}
A similar argument can be used if one replaces \((\Delta,\gamma)\) with finitely many disjoint elements of \(\cW\).
\end{remark}
Next, we can construct a semi-simplicial space from \(\cW\), \(Y_\bullet\), by letting 
\[Y_p := \coprod_{\substack{p+1 \text{ disjoint elements of } \cW (\Delta_0,\gamma_0), \dots , (\Delta_p,\gamma_p)}} \Gamma(E, \Delta_0, \dots, \Delta_p) ,\]
where the face maps are given by forgetting subspaces. Here \(\Gamma(E, \Delta_0, \dots, \Delta_p)\) is the space of sections \(f\) of \(E\) that satisfy \(f|_{\Delta_i} = \gamma_i\). 
We can also consider the semi-simplicial space \(W_\bullet\) given by
\[W_p := \coprod_{p+1 \text{ disjoint elements of } \cW (\Delta_0,\gamma_0), \dots , (\Delta_p,\gamma_p)},\]
where again the face maps are given by forgetting subspaces. 
These semi-simplicial spaces are related via the following lemma. 

\begin{lemma}\label{Geometric-Realization-Explicit-Formulation} The space \(||Y_\bullet||\) is homeomorphic to the subspace of \(\Gamma(E) \times || W_\bullet ||\) given by \(\{ (f,\overrightarrow{w},\overrightarrow{t}) | f(m) = \gamma_i(m) \text{ whenever } m \in \Delta_i \text{ and } t_i \neq 0\}/ \sim\), where \((\Delta_i,\gamma_i)\) is the ith component of \(\overrightarrow{w}\), and \(\sim\) is just the usual geometric realization equivalence on the second factor. 
\end{lemma}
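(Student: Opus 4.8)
The plan is to write down the evident comparison map, verify it is a continuous bijection by means of the normal form for geometric realizations of semisimplicial spaces, and then upgrade it to a homeomorphism one skeleton at a time.

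\emph{The map.} On $Y_p\times\Delta^p$ I send $\big((\Delta_0,\gamma_0),\dots,(\Delta_p,\gamma_p),f,\overrightarrow{t}\big)$ to $\big(f,[\overrightarrow{w},\overrightarrow{t}]\big)\in\Gamma(E)\times||W_\bullet||$, where $\overrightarrow{w}$ denotes the tuple of pairs and $[\overrightarrow{w},\overrightarrow{t}]$ its class in $||W_\bullet||$. Because $Y_p=\coprod_{\overrightarrow{w}}\Gamma(E,\Delta_0,\dots,\Delta_p)$ is a coproduct of subspaces of $\Gamma(E)$, hence a subspace of $\Gamma(E)\times W_p$, this map is the composite of that inclusion (times $\mathrm{id}_{\Delta^p}$) with $\mathrm{id}_{\Gamma(E)}$ times the realization map $\coprod_p W_p\times\Delta^p\to||W_\bullet||$, so it is continuous; and it is compatible with the face identifications because the $i$-th face map deletes the pair $(\Delta_i,\gamma_i)$ and its constraint on both $Y_\bullet$ and $W_\bullet$, so that it equals the map followed by the realization identification $(\overrightarrow{w},\delta^i(-))\sim(d_i\overrightarrow{w},-)$ on $||W_\bullet||$. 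Hence it descends to a continuous map $\psi\colon||Y_\bullet||\to\Gamma(E)\times||W_\bullet||$. If $f$ has germ $\gamma_i$ on $\Delta_i$ for every $i$, then a fortiori it does whenever $t_i\neq 0$, so $\psi$ has image in the subspace $Z$ of the statement; one also checks here that $Z$ is well posed, i.e.\ that the subset $\widetilde{Z}\subseteq\Gamma(E)\times\coprod_p W_p\times\Delta^p$ it is cut out from is saturated for the realization relation, which holds because the face maps only delete a pair $(\Delta_i,\gamma_i)$ at the moment they set $t_i$ to zero.

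\emph{Bijectivity.} Here I use the standard fact that every point of the realization of a semisimplicial space has a unique representative $(\overrightarrow{w},\overrightarrow{t})$ with all barycentric coordinates of $\overrightarrow{t}$ positive, to which any representative reduces by deleting precisely the vanishing coordinates. For surjectivity, write a point of $Z$ as $(f,[\overrightarrow{w}_0,\overrightarrow{t}_0])$ with $\overrightarrow{t}_0$ in the open simplex of some $\Delta^{p_0}$; as all coordinates are positive, the defining condition of $Z$ forces $f$ to have germ $\gamma_i$ on every $\Delta_i$ occurring in $\overrightarrow{w}_0$, so $\big((\overrightarrow{w}_0,f),\overrightarrow{t}_0\big)\in Y_{p_0}\times\Delta^{p_0}$ is a preimage. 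For injectivity, two points of $\coprod_p Y_p\times\Delta^p$ with the same image $(f,q)$ have the same section coordinate $f$, and their $(\overrightarrow{w},\overrightarrow{t})$'s both reduce to the unique open-simplex representative $(\overrightarrow{w}_0,\overrightarrow{t}_0)$ of $q$; these reductions are by face maps, which leave $f$ fixed, so both points already agree in $||Y_\bullet||$ with $\big((\overrightarrow{w}_0,f),\overrightarrow{t}_0\big)$. Thus $\psi\colon||Y_\bullet||\to Z$ is a continuous bijection.

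\emph{Homeomorphism.} It remains to see $\psi$ is open. The subtlety is that $\coprod_p Y_p\times\Delta^p$ sits inside the closed saturated set $\widetilde{Z}$ as a closed but \emph{non}-saturated subspace, so one cannot just restrict the quotient map $\Gamma(E)\times\coprod_p W_p\times\Delta^p\to\Gamma(E)\times||W_\bullet||$ (which is a quotient map since we work in compactly generated spaces) and be done. I would argue skeleton by skeleton: both $||Y_\bullet||$ and $Z$ are colimits, along closed inclusions, of their skeleta, where $\mathrm{sk}_n Z:=Z\cap\big(\Gamma(E)\times\mathrm{sk}_n||W_\bullet||\big)$, and $\psi$ restricts to continuous bijections $\psi_n\colon\mathrm{sk}_n||Y_\bullet||\to\mathrm{sk}_n Z$, so it is enough to show each $\psi_n$ is a homeomorphism and pass to the colimit. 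One inducts on $n$ ($n=0$ is the identity of $Y_0$): $\mathrm{sk}_n||Y_\bullet||$ is built from $\mathrm{sk}_{n-1}||Y_\bullet||$ by attaching $Y_n\times\Delta^n$ along the closed cofibration $Y_n\times\partial\Delta^n\hookrightarrow Y_n\times\Delta^n$, and one checks (i) that $Y_n\times(\Delta^n\setminus\partial\Delta^n)$ maps homeomorphically onto $\mathrm{sk}_n Z\setminus\mathrm{sk}_{n-1}Z$ — over the open stratum of $\mathrm{sk}_n||W_\bullet||$, where all barycentric coordinates are positive, the condition defining $Z$ reads exactly ``$f$ has germ $\gamma_i$ on every $\Delta_i$'', which is what defines $Y_n$ — with $\mathrm{sk}_{n-1}Z$ closed in $\mathrm{sk}_n Z$; and (ii) that the subspace topology on $\mathrm{sk}_n Z$ agrees with the pushout topology. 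In (ii) one inclusion is automatic; for the other I would thicken an open set $U\subseteq\mathrm{sk}_{n-1}Z$ using an open collar $\partial\Delta^n\times[0,1)$ of $\partial\Delta^n$ in $\Delta^n$: the image of $Y_n$ times this collar, together with $\mathrm{sk}_{n-1}Z$, is an open neighbourhood of $\mathrm{sk}_{n-1}Z$ in $\mathrm{sk}_n Z$ (the standard ``open collar in a pushout along a closed cofibration'' lemma), it retracts onto $\mathrm{sk}_{n-1}Z$ via the collar projection, and that retraction descends to $Z$ because pushing $\overrightarrow{t}$ to $\partial\Delta^n$ only \emph{enlarges} its set of vanishing coordinates and hence only weakens the constraint on $f$; the preimage of $U$ under the retraction then exhibits $U$ as locally open in $\mathrm{sk}_n Z$, and with (i) this produces a neighbourhood basis of $\mathrm{sk}_n Z$-open sets at each point. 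I expect step (ii) — the point-set check that the cell-by-cell topology agrees with the ambient subspace topology — to be the main obstacle; the rest is routine manipulation of the normal form for semisimplicial realizations and of closed cofibrations.
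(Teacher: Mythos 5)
Your overall route is genuinely different from the paper's: the paper simply observes that \(Y_\bullet\) is a levelwise closed sub-semisimplicial space of \(\Gamma(E)\times W_\bullet\), invokes the fact that realizations of such subobjects are subspaces of the realization, and then identifies \(\|\Gamma(E)\times W_\bullet\|\) with \(\Gamma(E)\times\|W_\bullet\|\) via the exterior-product lemma of Ebert--Randal-Williams. You instead reprove these facts by hand. Your continuity and well-definedness checks are fine, and the bijectivity argument via the unique open-simplex representative is complete and correct (it is worth recording explicitly that each \(\Gamma(E,\Delta_0,\dots,\Delta_p)\) is \emph{closed} in \(\Gamma(E)\), since the \(\Delta_i\) are compact; you use this tacitly when you treat \(Y_n\subseteq\Gamma(E)\times W_n\) as a closed subspace and when you assert that \(\mathrm{sk}_{n-1}Z\) is closed in \(\mathrm{sk}_nZ\)).

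The genuine gap is in step (ii), exactly where you anticipated it. The collar retraction \(r\) does descend to an ambient-continuous retraction of a neighbourhood \(N\) of \(\mathrm{sk}_{n-1}Z\) onto \(\mathrm{sk}_{n-1}Z\), for the reason you give (pushing \(\overrightarrow{t}\) outward only kills coordinates, hence only weakens the constraint on \(f\)). But \(r^{-1}(U\cap\mathrm{sk}_{n-1}Z)\) is \emph{not} contained in \(U\) in general: it contains the entire collar fibre over each point of \(U\cap\mathrm{sk}_{n-1}Z\), whereas a pushout-open \(U\) only guarantees that its preimage \(V\subseteq Y_n\times\Delta^n\) is open, and \(V\) need not contain any uniform collar over \(V\cap(Y_n\times\partial\Delta^n)\). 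So the set you produce is a subspace-open neighbourhood of the point, but it does not witness openness of \(U\) itself; to finish along these lines you would have to shrink the collar fibrewise (via a continuous function measuring how far \(V\) extends into the collar direction) so that the shrunken collar over the relevant part of \(Y_n\times\partial\Delta^n\) lies inside \(V\), and then take the union with \(\psi_n(V\cap(Y_n\times\mathrm{int}\,\Delta^n))\). A cleaner repair avoids collars entirely: show \(\psi\) is a \emph{closed} map by noting that \(\Gamma(E)\times\coprod_pW_p\times\Delta^p\to\Gamma(E)\times\|W_\bullet\|\) is a quotient map in compactly generated spaces (as you observe), that it restricts to a quotient map \(\widetilde Z\to Z\) on the closed saturated set \(\widetilde Z\), and that the saturation inside \(\widetilde Z\) of a closed saturated subset of \(\coprod_pY_p\times\Delta^p\) is levelwise a finite union, over injections \(\phi\colon[q]\to[p]\), of closed sets (the \(\phi\)-preimage of the level-\(q\) piece intersected with the vanishing locus of the omitted barycentric coordinates), hence closed. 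Alternatively, simply quote the statement you are in effect reproving: a levelwise closed inclusion of semisimplicial spaces induces a closed inclusion of realizations, which together with your quotient-map observation gives the paper's two-line proof. Finally, your skeletal induction also presupposes that \(Z=\mathrm{colim}_n\,\mathrm{sk}_nZ\) as topological spaces; this again uses that \(\Gamma(E)\times(-)\) commutes with the colimit defining \(\|W_\bullet\|\) in the compactly generated category, so say so.
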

\begin{proof}
It is clear these are the same as sets, so we just need to show that this map is a homeomorphism onto some subspace. 
First, since the quotient of a subspace is naturally a subspace of the quotient in compactly generated spaces, we have 
\[||Y_\bullet|| \subset ||\Gamma(E) \times W_\bullet||. \]
On the other hand, by \cite{Randal-Williams} page \(2106\) we have that 
\[||\Gamma(E) \times W_\bullet|| = ||\Gamma(E) \otimes W_\bullet|| \cong ||\Gamma(E)|| \times ||W_\bullet|| \cong \Gamma(E) \times ||W_\bullet||,\]
where we are treating \(\Gamma(E)\) as a semi-simplicial space with only \(0\)-simplices in order to use the exterior product defined in \cite{Randal-Williams} page \(2103\).
So, \(||Y_\bullet||\) is homeomorphic to the subspace of \(\Gamma(E) \times ||W_\bullet||\) described above. 
\end{proof} 

We can now introduce the main result of this section:

\begin{theorem}\label{Section-Space-Weak-Equivalence} Let \(\cW\) be a sufficiently separated collection of subsets of \(M\), and \(||Y_\bullet||\) be as above. The map \(||Y_\bullet|| \to \Gamma(E)\) given by forgetting the fixed subsets is a weak equivalence. 
\end{theorem}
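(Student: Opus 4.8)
The plan is to recast the forgetful map as a projection and then apply the standard criterion that a Serre microfibration with weakly contractible fibres is a weak equivalence. First I would use Lemma~\ref{Geometric-Realization-Explicit-Formulation} to identify
\[
||Y_\bullet|| \;\cong\; \bigl\{\, (f,\vec w,\vec t) \in \Gamma(E)\times||W_\bullet|| \;:\; f|_{\Delta_i}=\gamma_i|_{\Delta_i}\ \text{whenever}\ t_i\neq 0 \,\bigr\},
\]
where $(\Delta_i,\gamma_i)$ denotes the $i$-th entry of the configuration $\vec w$; under this identification the map of the theorem is the restriction of the projection $\mathrm{pr}_1\colon\Gamma(E)\times||W_\bullet||\to\Gamma(E)$. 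Since $||W_\bullet||$ is a CW complex in which every point has a unique representative $(\vec w,\vec t)$ with all barycentric coordinates positive, the fibre $\mathrm{pr}_1^{-1}(f)$ is precisely the realization $||W^f_\bullet||$ of the sub-semi-simplicial set $W^f_\bullet\subseteq W_\bullet$ spanned by those configurations all of whose entries $(\Delta,\gamma)$ are \emph{compatible with} $f$, i.e.\ satisfy $f|_\Delta=\gamma|_\Delta$.

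The first substantial step is to show that $\mathrm{pr}_1$ is a Serre microfibration: given $\phi\colon\bD^n\to||Y_\bullet||$ and a homotopy $\Phi\colon\bD^n\times[0,1]\to\Gamma(E)$ of $\mathrm{pr}_1\circ\phi$, I must produce a lift of $\Phi$ over $\bD^n\times[0,\varepsilon]$ for some $\varepsilon>0$ extending $\phi$. Writing $\phi(x)=(f_x,\vec w_x,\vec t_x)$ and using compactness of $\bD^n$, the configurations $\vec w_x$ meet only finitely many simplices of $||W_\bullet||$ and so involve only finitely many elements $(\Delta_1,\gamma_1),\dots,(\Delta_N,\gamma_N)$ of $\cW$; along the lift I would keep the combinatorial data $\vec w_x$ fixed and only move the barycentric coordinates. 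Fixing a metric near the image of $E$, the functions $\delta_j(x,s)=\sup_{m\in\Delta_j}\mathrm{dist}\bigl(\Phi(x,s)(m),\gamma_j(m)\bigr)$ are continuous — here compactness of the $\Delta_j$ and hypothesis~(1) of Definition~\ref{Sufficiently-Separated-Collection}, which makes $\Delta_j\hookrightarrow M$ a closed cofibration and hence the restriction $\Gamma(E)\to\Gamma(E|_{\Delta_j})$ a fibration, are what is needed for the estimates — and one defines the lifted coordinate of $\Delta_j$ at time $s$ by scaling $t_j$ by a cutoff that drops to $0$ once $\sup_{s'\le s}\delta_j(x,s')$ exceeds a small threshold, then renormalizing so the coordinates sum to $1$. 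For $s$ in a sufficiently small (compactly uniform) interval this is continuous, restricts to $\phi$ at $s=0$, and lands in $||Y_\bullet||$ by construction. Making this partial lift depend continuously on $x$ across the faces of $||W_\bullet||$ is, I expect, the main technical obstacle.

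Next I would verify that each fibre $||W^f_\bullet||$ is weakly contractible by a coning argument. A map from a sphere into $||W^f_\bullet||$ has image in a finite subcomplex, hence involves only finitely many $f$-compatible elements of $\cW$; hypothesis~(2) of Definition~\ref{Sufficiently-Separated-Collection} then produces an element $(\Delta',\gamma')\in\cW$, which one also arranges to be compatible with $f$, whose support disk lies in a ball disjoint from all of those. Prepending $(\Delta',\gamma')$ to every simplex then exhibits that finite subcomplex as a face of a cone with cone point $(\Delta',\gamma')$ inside $W^f_\bullet$, so the sphere bounds; varying over all spheres gives $||W^f_\bullet||\simeq *$. (One should also observe that $\mathrm{pr}_1$ is surjective — every section of $E$ is compatible with at least one element of $\cW$ — so that no path-component of $\Gamma(E)$ is missed.) Finally I would conclude with the standard fact (Weiss) that a Serre microfibration with weakly contractible fibres is a weak homotopy equivalence onto the union of the path-components of the base that it hits; combined with surjectivity this yields that $||Y_\bullet||\to\Gamma(E)$ is a weak equivalence.
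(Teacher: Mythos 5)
Your strategy (exhibit the map as a Serre microfibration with weakly contractible fibres and invoke Weiss's lemma) is not the paper's, and unfortunately it breaks down at both of its pillars. First, the fibres are not weakly contractible: the fibre over $f$ is empty unless $f$ literally agrees with some germ $\gamma$ on some $\Delta$ with $(\Delta,\gamma)\in\cW$, and nothing in Definition~\ref{Sufficiently-Separated-Collection} guarantees this. Your parenthetical claim that every section is compatible with at least one element of $\cW$ is false in general, and in the motivating application it fails badly --- a generic almost contact structure does not restrict to the overtwisted model germ on any embedded disk, which is exactly why $\Co^{OT}(M)\to\AC(M)$ is far from surjective. Even when the fibre is nonempty, your coning argument needs condition~(2) to produce a \emph{disjoint and $f$-compatible} element, but condition~(2) only supplies disjointness; compatibility of the new germ with $f$ is an extra demand that the definition does not meet. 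Second, the projection is not a Serre microfibration: compatibility $f|_{\Delta_j}=\gamma_j|_{\Delta_j}$ is a closed condition, not an open one, so if a homotopy $\Phi(x,s)$ moves $f$ off $\gamma_j$ over $\Delta_j$ for every $s>0$, the weight $t_j$ would have to be exactly $0$ for all $s>0$ while being positive at $s=0$; since the subcomplex of $||W_\bullet||$ of configurations avoiding $(\Delta_j,\gamma_j)$ is closed, no continuous lift exists on any $[0,\varepsilon]$. Your proposed cutoff produces points with $\delta_j>0$ but $t_j>0$, which do not lie in $||Y_\bullet||$.

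The idea you are missing, and which the paper's proof is built around, is that one does not lift $\alpha\colon\bD^n\to\Gamma(E)$ on the nose but only after a homotopy rel boundary, and the target $\Gamma(E)$ is flexible enough to supply that homotopy. Concretely: compactness of $S^{n-1}$ shows the boundary lift involves only finitely many elements of $\cW$; condition~(2) provides a fresh $(\Delta,\gamma)$ whose ball $D$ is disjoint from all of them; and then path-connectedness of the fibre $X$ together with the closed cofibration $\Delta\hookrightarrow D$ (condition~(1)) and the homotopy extension property let one deform $\alpha$, rel the boundary annulus and rel the complement of $D$, so that it equals $\gamma$ over $\Delta$ on the inner ball. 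After this deformation an explicit lift is written down by coning the boundary data off the new vertex $(\Delta,\gamma)$. In short, the contractibility you want does not live in the fibres of the projection; it is manufactured by homotoping the base map, and that step has no counterpart in the microfibration framework.
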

\begin{proof}
We will prove this by showing that the relative homotopy groups of the map are zero. So, let \(\alpha:\bD^n \to \Gamma(E)\) be a continuous map such that we have the following diagram:

\[\begin{tikzcd}
S^{n-1} \arrow[r, "\partial \alpha"] \arrow[d] & ||||Y_\bullet|| \arrow[d] \\
\bD^n \arrow[r, "\alpha"]                        & {\Gamma(E)}             
\end{tikzcd}\]
where by abuse of notation \(\partial \alpha\) is the map \(S^{n-1} \to ||Y_\bullet||\), \( p \mapsto (\alpha(p), \overrightarrow{w_p}, \overrightarrow{t_p})\) for some finite ordered set of elements \(\overrightarrow{w_p}\) of \(\cW\) and weights \(\overrightarrow{t_p}\) (where of course we just exclude any elements when their weight goes to zero). Then we need to show that there exists a continuous map \(
\alpha' \simeq \alpha\) relative to the boundary, and a lift \(\beta: \bD^n \to ||Y_\bullet||\) of \(\alpha'\) such that the resulting diagram still commutes.

First, consider the section 
\[\alpha_1(p)(m) := \begin{cases} \alpha(2p)(m) & 0 \leq |p| \leq \frac{1}{2} \\
						\alpha(\frac{p}{|p|}) & \frac{1}{2} \leq |p| \leq 1
\end{cases}.\]
So \(\alpha_1\) is just \(\alpha\) compressed to a smaller ball, with the boundary extended to an annulus. It is clear that \(\alpha_1 \simeq \alpha\) relative to the boundary, so we can work with \(\alpha_1\), which gives us a buffer away from the boundary. Next, consider the set \(\tilde{W} := \{ m \in M |\) for some \(p \in S^{n-1}\) and some positive integer \( i, m \in \Delta_{p,i}\), where \( (\Delta_{p,i},\gamma_{p,i})\) is the ith component of \( \overrightarrow{w}_p\}\). Note that \(\alpha_1\) lifts to \(||Y_\bullet||\) on the annulus, and furthermore if we only change \(\alpha_1\) away from \(\tilde{W}\) then it will still lift in the annulus. Now, using the natural projection map from \(||Y_\bullet|| \to ||W_\bullet||\) given in Lemma \ref{Geometric-Realization-Explicit-Formulation}, we can consider the map 
\[S^{n-1} \to ||Y_\bullet|| \to ||W_\bullet||,\]
where \(p \mapsto (\overrightarrow{w_p}, \overrightarrow{t_p})\). Since \(S^{n-1}\) is compact we know that the image of this map is compact, and so it hits finitely many cells of \(||W_\bullet||\). Also, each cell consists of finitely many elements of \(\cW\), so the set of all elements of \(\cW\) that are a component of \(\overrightarrow{w_p}\) for some \(p\) is finite. But, if \(\{(\Delta_1,\gamma_1),\dots, (\Delta_k,\gamma_k)\}\) is that set, then clearly \(\tilde{W} = \Delta_1 \cup \dots \cup \Delta_k\). Let \((\Delta,\gamma) \in \cW\) be such that there is some neighborhood \(D \cong \bD^d\) of \(\Delta\) that is disjoint from \(\tilde{W}\). Since a map \(\bD^n \to \Gamma(E)\) is the same data as a section of the bundle \(\bD^n \times E \to \bD^n \times M\), \((p,e) \mapsto (p,\pi(e))\) we will from now on consider \(\alpha, \alpha_1\) as maps from \(\bD^n \times M \to \bD^n \times E\) so that composition with the projection map is the identity. Now, let \(g_0 = \alpha_1|_{\bD^n \times D}\), \(\bD^n_{\frac{3}{4}}:= \{ p \in \bD^n | |p| \leq \frac{3}{4}\}\). Since \(\bD^n \times D\) is a contractible submanifold of \(\bD^n \times M\), we know that the restriction of the bundle to \(\bD^n \times D\) is trivial, and so there is some trivialization of the bundle \(\bD^n \times M = \cup_i U_i\) such that \(\bD^n \times D\) is completely contained in some \(U_i\). That is, \(g_0\) and any other sections on (a subset of) \(\bD^n \times D\) just become maps from (a subset of) \(\bD^n \times D\) to \(X\).  Consider the homotopy 
\[(\partial \bD^n \cup \bD^n_{\frac{3}{4}}) \times (\partial D \cup \Delta) \times [0,1] \to X\]
given by 
\[H(p,m,s) := \begin{cases} \alpha_1(p,m) & p \in \partial \bD^n \\
					\alpha_1(p,m) & m \in \partial D \\
					h(p,m,s) & (p,m) \in \bD^n_{\frac{3}{4}} \times \Delta
\end{cases}, \]
Where \(h(p,m,s)\) is a homotopy between \(\alpha_1|_{\bD^n_{\frac{3}{4}} \times \Delta}\) and the map \((p,m) \mapsto \gamma(m)\). Such an \(h\) exists since \(\bD^n_{\frac{3}{4}} \times \Delta\) is contractible and \(X\) is path connected, which implies any two maps are homotopic. Since the inclusion of \(\Delta\) into \(D\) is a cofibration, and the boundary of \(D\) is disjoint from \(\Delta\), we have that \((D,\partial D \cup \Delta)\) satisfies the homotopy extension property. Clearly \((\bD^n, \partial \bD^n \cup \bD_{\frac{3}{4}}^n)\) does as well, so we can apply the homotopy extension property twice, once for each of the two factors, to get a homotopy 
\[\bD^n \times D \times [0,1] \to X\] 
between \(g_0\) and a function \(g_1:\bD^n \times D \to X\), relative to the boundary, so that \(g_1 = g_0\) on \(\partial \bD^n\), \(\partial D\), and \((p,m) \mapsto \gamma(m)\) on \( \bD^n_{\frac{3}{4}} \times \Delta\). Now, we can extend this to a section on all of \(M\) by letting 
\[\alpha_2(p,m) := \begin{cases} \alpha_1(p,m) & m \notin D \\
						g_1(p,m) & m \in D
\end{cases}.\]
Clearly \(\alpha_2 \simeq \alpha_1\) relative to the boundary, \(\alpha_2(p,m) = \gamma(m)\) when \(|p| \leq \frac{3}{4}\), \(m \in \Delta\), and for \(m \in \tilde{W}\) we have \(\alpha_2(p,m) = \alpha_1(p,m) = \gamma_{\frac{p}{|p|},i}(m)\) when \(|p| \geq \frac{1}{2}\). So, all we need to do now is show that \(\alpha_2\) lifts. Let us again view these as maps from \(\bD^n \to \Gamma(E)\), and consider the map \(\beta: \bD^n \to ||Y_\bullet||\) given by 
\[\beta(p) := \begin{cases} (\alpha_2(p),(\Delta,\gamma),1) & 0 \leq |p| \leq \frac{1}{2} \\
					(\alpha_2(p), (\overrightarrow{w}_{\frac{p}{|p|}},(\Delta,\gamma)), ((4|p|-2)\overrightarrow{t}_{\frac{p}{|p|}}, 3-4|p|)) & \frac{1}{2} < |p| \leq \frac{3}{4} \\
					(\alpha_2(p), \overrightarrow{w}_{\frac{p}{|p|}},\overrightarrow{t}_{\frac{p}{|p|}}) & \frac{3}{4} < |p| \leq 1
\end{cases}.\]
Clearly \(\beta\) is a lift of \(\alpha_2\), as required.
\end{proof}
\begin{remark}
If we let \(A \subset M\) be a closed set such that \(M \setminus A\) is connected, and \(\xi_0\) be a section on a neighborhood of \(A\), we can run the exact same argument if we suppose \(\cW\) is a sufficiently separated collection in \(M \setminus A\) and consider sections on \(M\) that agree with \(\xi_0\) near \(A\).
\end{remark}
\section{H-Principle}
\label{sec:three}

Let \(M\) be a \(d\)-dimensional manifold without boundary. We would like to use the previous theorem to show that the map \(F(M) \to F^f(M)\) admits a section up to homotopy under the right conditions. We know there exists some bundle \(E \to M\) such that \(F^f(M)\) is the space \(\Gamma(E)\) of sections of \(E\). Suppose furthermore that the fibers of \(E\) are path connected, and let \(W\) be a sufficiently separated collection for the bundle \(E \to M\). 
\begin{theorem}\label{Relative-H-Principle}
If the natural inclusion map \(F(M \text{ rel } (\Delta_1, \gamma_1), \dots, (\Delta_k,\gamma_k)) \to F^f(M \text{ rel } (\Delta_1,\gamma_1), \dots, (\Delta_k,\gamma_k))\) is a weak equivalence for all finite tuples of disjoint elements \(\{(\Delta_i,\gamma_i)\}_{i=1}^k \in \cW\), \(k \geq 1\), then there exists a space \(X\) and a map \(X \to F(M)\) such that the following diagram commutes:

\[\begin{tikzcd}
X \arrow[r] \arrow[rd, "\simeq"'] & F(M) \arrow[d] \\
                                 & F^f(M)        
\end{tikzcd}\]
where the map \(X \to F^f(M)\) is a weak equivalence. 
\end{theorem}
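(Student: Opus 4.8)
The plan is to realize $X$ as the geometric realization of a semi-simplicial space of \emph{holonomic} relative solutions, mirroring the semi-simplicial space $||Y_\bullet||$ of formal relative solutions built in \hyperref[sec:two]{Section Two}. Define $Y^h_\bullet$ by
\[
Y^h_p := \coprod_{\substack{p+1 \text{ disjoint elements of } \cW \\ (\Delta_0,\gamma_0), \dots , (\Delta_p,\gamma_p)}} F(M \text{ rel } (\Delta_0,\gamma_0),\dots,(\Delta_p,\gamma_p)),
\]
with face maps given by forgetting a tuple, and set $X := ||Y^h_\bullet||$. Forgetting all the relative conditions defines a map of semi-simplicial spaces $u_\bullet \colon Y^h_\bullet \to c(F(M))$ into the constant semi-simplicial space on $F(M)$: on each summand it is the inclusion $F(M \text{ rel } \cdots) \hookrightarrow F(M)$, which is compatible with the face maps since those of $c(F(M))$ are identities. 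Composing $||u_\bullet||$ with the canonical projection $||c(F(M))|| \to F(M)$ (a homotopy equivalence) produces the map $X \to F(M)$; concretely it sends a point represented by a triple $(f,\vec w,\vec t)$ as in Lemma \ref{Geometric-Realization-Explicit-Formulation} to $f \in F(M)$.

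To see that the composite $X \to F(M) \hookrightarrow F^f(M)$ is a weak equivalence I would compare $Y^h_\bullet$ directly with the formal resolution $Y_\bullet$ of \hyperref[sec:two]{Section Two}. For a disjoint tuple $\vec w = ((\Delta_0,\gamma_0),\dots,(\Delta_p,\gamma_p))$, a holonomic solution with germ $\gamma_i$ near $\Delta_i$ is in particular a section of $E$ whose value on $\Delta_i$ is $\gamma_i$, so there is a summandwise inclusion $F(M \text{ rel } \vec w) \hookrightarrow \Gamma(E,\Delta_0,\dots,\Delta_p)$, and these assemble into a map of semi-simplicial spaces $r_\bullet \colon Y^h_\bullet \to Y_\bullet$. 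This inclusion factors as
\[
F(M \text{ rel } \vec w) \longrightarrow F^f(M \text{ rel } \vec w) = \Gamma_g(E,\Delta_0,\dots,\Delta_p) \hookrightarrow \Gamma(E,\Delta_0,\dots,\Delta_p),
\]
where the first map is a weak equivalence by the hypothesis of the theorem and the second by Lemma \ref{Germs-Are-Irrelevant} together with the remark following it. Since a coproduct of weak equivalences is a weak equivalence, $r_\bullet$ is a levelwise weak equivalence.

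Next I would invoke the standard fact that a levelwise weak equivalence of semi-simplicial spaces induces a weak equivalence on geometric realizations: the skeletal filtration presents $||Z_\bullet||^{(p)}$ as a pushout of $||Z_\bullet||^{(p-1)} \leftarrow Z_p \times \partial\Delta^p \hookrightarrow Z_p \times \Delta^p$ along the cofibration $Z_p \times \partial\Delta^p \hookrightarrow Z_p \times \Delta^p$, hence a homotopy pushout, so a levelwise weak equivalence induces weak equivalences on all skeleta by induction and therefore on the colimit $||Z_\bullet||$; no cofibrancy hypothesis on $Z_\bullet$ is needed. Applying this to $r_\bullet$ shows $||r_\bullet|| \colon X \to ||Y_\bullet||$ is a weak equivalence, and by Theorem \ref{Section-Space-Weak-Equivalence} the ``forget the fixed subsets'' map $||Y_\bullet|| \to \Gamma(E) = F^f(M)$ is a weak equivalence; hence so is the composite $X \to F^f(M)$. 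Finally, tracing a representing triple $(f,\vec w,\vec t)$ through both routes shows the triangle commutes on the nose: both $X \to F(M) \hookrightarrow F^f(M)$ and $X \to ||Y_\bullet|| \to F^f(M)$ carry it to $f$ regarded as a section of $E$. Taking this $X$ and this map $X \to F(M)$ then proves the theorem.

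The step I expect to demand the most care is not a deep input but the compatibility bookkeeping: checking that $r_\bullet$ and $u_\bullet$ are genuinely maps of semi-simplicial spaces, that the map $||Y_\bullet|| \to \Gamma(E)$ of Theorem \ref{Section-Space-Weak-Equivalence} is precisely the realization of the summandwise ``forget'' maps (so the triangle commutes strictly, not merely up to homotopy), and keeping the germ-fixed and value-fixed versions of the relative section spaces straight, using Lemma \ref{Germs-Are-Irrelevant} to pass between them. The only genuinely homotopy-theoretic ingredient beyond Theorem \ref{Section-Space-Weak-Equivalence} and Lemma \ref{Germs-Are-Irrelevant} is the preservation of levelwise weak equivalences under geometric realization of semi-simplicial spaces, which I would state and prove (via the skeletal-filtration argument above) or else cite explicitly.
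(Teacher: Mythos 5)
Your proposal is correct and follows essentially the same route as the paper: build the semi-simplicial space of relative holonomic solutions, use the hypothesis to get a levelwise (hence realization-level) weak equivalence to the formal resolution, and conclude via Theorem \ref{Section-Space-Weak-Equivalence}. Your version is somewhat more careful than the paper's in making explicit the passage between germ-fixed and value-fixed relative section spaces via Lemma \ref{Germs-Are-Irrelevant} and in justifying that levelwise weak equivalences realize to weak equivalences, but these are elaborations of the same argument rather than a different one.
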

\begin{proof}
First, let \(X_\bullet\) be the semi-simplicial space given by 
\[X_p := \coprod_{p+1 \text{ disjoint elements of } \cW (\Delta_0,\gamma_0), \dots , (\Delta_p,\gamma_p)} F(M \text{ rel } \Delta_0, \dots, \Delta_p),\]
and \(Y_\bullet\) be the semi simplicial space given by replacing each \(F(M)\) in \(X_\bullet\) with its formal version, i.e.
\[Y_p := \coprod_{p+1 \text{ disjoint elements of } \cW (\Delta_0,\gamma_0), \dots , (\Delta_p,\gamma_p)} F^f(M \text{ rel } \Delta_0, \dots, \Delta_p).\]
By assumption, \(F(M \text{ rel } \Delta_0, \dots, \Delta_p) \to F^f(M \text{ rel } \Delta_0, \dots, \Delta_p)\) is a weak equivalence for all tuples of elements of \(\cW\), and so \(||X_\bullet|| \to ||Y_\bullet||\) is also a weak equivalence. Also, by Theorem \ref{Section-Space-Weak-Equivalence}, \(||Y_\bullet|| \to  F^f(M)\) is a weak equivalence, so we get the following commutative diagram:
\[\begin{tikzcd}
||||X_\bullet|| \arrow[r] \arrow[d, "\simeq"] & F(M) \arrow[d] \\
||||Y_\bullet|| \arrow[r, "\simeq"]           & F^f(M)        
\end{tikzcd}
\]
where the vertical maps are induced by the inclusion \(F(M) \to F^f(M)\) and the horizontal maps come from forgetting about the fixed sets. 
\end{proof}
\begin{remark}
A more common formulation of such a relative \(h\)-principle result is that there is an \(h\)-principle relative to any fixed closed set \(A\) and fixed subset \(\Delta\), where \(\Delta\) is from some special collection. Such a result will usually imply the result for many fixed subsets \(\Delta_1, \dots, \Delta_k\), since we can take \(A\) to be the union of the first \(k-1\) such sets, and use \(\Delta_k\) as our fixed subset \(\Delta\). 
\end{remark}
\section{Improved H-Principle for Contact Geometry}
\label{sec:four}

Let us briefly recall some basic definitions from contact geometry. A (cooriented) contact structure on a connected, orientable, \(2n+1\)-dimensional manifold \(M\) is a ``maximally non-integrable'' hyperplane distribution \(\xi = ker(\alpha)\) for a \(1\)-form \(\alpha\). Maximally non-integrable means \(\alpha \wedge d\alpha^n \neq 0\). This naturally induces a reduction of the structure group of \(M\) to \(U(n) \times 1\), and so an {\em almost contact structure} is just a reduction of the structure group of \(M\) to \(U(n) \times 1\). Equivalently, an almost contact structure is a triple \((\xi,J,R)\), where \(\xi\) is a hyperplane distribution, \(J\) is a complex structure on \(\xi\), and \(R\) is a trivial sub - line bundle of \(TM\) such that \(\xi \oplus R = TM\). We let \(\Co(M)\) denote the space of contact structures on \(M\) and \(\AC(M)\) denote the space of almost contact structures on \(M\). 

Next, we recall the notion of an overtwisted contact structure. An {\em overtwisted disk} in a manifold \(M\) is a pair \((\Delta,\gamma)\), where \(\Delta \subset M\) is an embedded \(2n\)-dimensional disk and \(\gamma\) is a certain model germ of a contact structure on \(\Delta\). Then a contact manifold \((M,\xi)\) is said to be overtwisted if there exists an embedding of an overtwisted disk \((\Delta,\gamma)\) such that the contact germ \(\gamma\) agrees with \(\xi\) on some neighborhood of \(\Delta\) (i.e. the embedding is a contact embedding). In this case we say that \(\Delta\) is overtwisted for \(\xi\). The details of this definition in any dimension can be found in \cite{Borman} (Definition 3.6). Let \(\Co^{OT}(M)\) denote the space of overtwisted contact structures on \(M\).
It has been shown by \cite{Borman} (Theorem \(1.2\)) that if \(M\) is a closed \(2n+1\)-dimensional manifold, \(A\) is a closed subset of \(M\) such that \(M \setminus A\) is connected, \((\Delta,\gamma)\) is an overtwisted disk in \(M \setminus A\), and \(\xi_0\) is an almost contact structure on \(M\) that is a genuine contact structure on a neighborhood of \(A\), then the map 
\[\Co^{OT}(M,(A,\xi_0),(\Delta,\gamma)) \to \text{AlmCont}(M,(A,\xi_0),(\Delta,\gamma))\]
is a weak equivalence. Here \(\Co^{OT}(M,(A,\xi_0),(\Delta,\gamma))\) is the space of contact structures on \(M\) that agree with \(\xi_0\) in a neigborhood of \(A\) and are overtwisted with disk \(\Delta\), and \(\text{AlmCont}(M,(A,\xi_0),(\Delta,\gamma))\) is the corresponding space of almost contact structures.

From this, we would like to prove the following theorem: 
\begin{theorem}\label{Improved-Contact-Geometry}
Let \(M\) be a closed, connected, \(2n+1\) dimensional manifold. Then the map 
\[\Co^{OT}(M) \to \text{AlmCont}(M),\] 
admits a section up to homotopy.  
\end{theorem}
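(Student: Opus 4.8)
The plan is to verify that the collection $\cW$ of all overtwisted disks in $M$ is a sufficiently separated collection for the bundle $E \to M$ whose sections are almost contact structures, and then to feed the Borman--Eliashberg--Murphy relative $h$-principle into Theorem~\ref{Relative-H-Principle}. The conclusion about the section up to homotopy then drops out formally, taking $X = \|X_\bullet\|$ with $X_\bullet$ the semi-simplicial space of holonomic (overtwisted contact) solutions rel tuples of disks.

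First I would set up the bundle: an almost contact structure is a reduction of the structure group to $U(n) \times 1$, so $F^f(M) = \AC(M) = \Gamma(E)$ for the associated bundle $E \to M$ with fiber $SO(2n+1)/(U(n)\times 1)$, which is path connected (indeed simply connected), as required in Section~\ref{sec:three}. Next I would check the two axioms of Definition~\ref{Sufficiently-Separated-Collection} for the collection of overtwisted disks. Axiom (1) is essentially immediate: an overtwisted disk $\Delta$ is a smoothly embedded $2n$-disk, hence sits in the interior of some embedded $d$-ball $D \cong \bD^{d}$ (here $d = 2n+1$), and the inclusion of a smooth compact submanifold-with-boundary into $D$ is a closed cofibration. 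Axiom (2) is the geometric heart of the argument: given finitely many disjoint overtwisted disks $(\Delta_1,\gamma_1),\dots,(\Delta_k,\gamma_k)$, I need to produce a further overtwisted disk $(\Delta',\gamma')$ whose underlying disk lies in the interior of a ball $D$ disjoint from all the $\Delta_i$. Since $M$ is connected and the $\Delta_i$ are compact, $M \setminus (\Delta_1 \cup \dots \cup \Delta_k)$ is a nonempty open set; I would pick a small ball $D$ inside it, and then invoke the fact (from \cite{Borman}) that every such ball contains an embedded overtwisted disk with the standard model germ --- equivalently, the standard overtwisted germ can be realized on an arbitrarily small $2n$-disk inside any given chart. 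That produces the required $(\Delta',\gamma')$.

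With both axioms verified, $\cW$ is sufficiently separated, and the Borman--Eliashberg--Murphy theorem (Theorem~1.2 of \cite{Borman}, specialized to $A = \emptyset$) says precisely that $\Co^{OT}(M \text{ rel } (\Delta_1,\gamma_1),\dots,(\Delta_k,\gamma_k)) \to \AC(M \text{ rel } (\Delta_1,\gamma_1),\dots,(\Delta_k,\gamma_k))$ is a weak equivalence for every finite disjoint tuple in $\cW$. One subtlety to address here: Borman--Eliashberg--Murphy fix a single disk $\Delta$ (and a closed set $A$), whereas the hypothesis of Theorem~\ref{Relative-H-Principle} needs all of $\Delta_1,\dots,\Delta_k$ fixed simultaneously. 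As noted in the remark following Theorem~\ref{Relative-H-Principle}, this is handled by taking $A$ to be a closed neighborhood of $\Delta_1 \cup \dots \cup \Delta_{k-1}$ on which a genuine contact structure has been fixed, using $\Delta_k$ as the distinguished disk, and observing that $M \setminus A$ remains connected (shrink the neighborhood; the $\Delta_i$ are disks, hence have connected complement in the connected $M$). Then Theorem~\ref{Relative-H-Principle} applies verbatim with $F(M) = \Co^{OT}(M)$ and $F^f(M) = \AC(M) = \text{AlmCont}(M)$, yielding the desired homotopy section.

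The main obstacle I anticipate is not the abstract machinery --- that is now purely formal given Theorem~\ref{Relative-H-Principle} --- but rather the careful bookkeeping in Axiom (2) and in the reduction from the "single disk rel closed set $A$" form of the $h$-principle to the "many disks" form: one must be sure that the auxiliary almost contact structure $\xi_0$ can be chosen to be a genuine overtwisted contact structure near $A$ compatibly across the whole tuple, and that connectivity of $M \setminus A$ is preserved. Both are routine given that disks have connected complement in a connected manifold of dimension $\geq 2$, but they are the places where the argument could go wrong if $M$ were disconnected or low-dimensional, which is why the hypotheses "closed" and "connected" appear in the statement.
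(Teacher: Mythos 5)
Your proposal is correct and follows essentially the same route as the paper: identify $\AC(M)$ as the section space of the bundle with connected fiber $SO(2n+1)/U(n)$, verify that the collection of all overtwisted disks is sufficiently separated (condition (1) via a regular neighborhood, condition (2) by placing a new overtwisted disk in the nonempty complement of finitely many disks), reduce the many-disk $h$-principle to the Borman--Eliashberg--Murphy single-disk-rel-$A$ statement by absorbing $\Delta_1\cup\dots\cup\Delta_{k-1}$ into $A$, and then apply Theorem~\ref{Relative-H-Principle}. Your extra attention to connectivity of $M\setminus A$ is a reasonable refinement of the paper's argument but does not change the approach.
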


\begin{proof} 
 First, let \(X_\bullet\) be given by 
\[X_p := \coprod_{p+1 \text{ disjoint overtwisted disks } (\Delta_0,\gamma_0), \dots , (\Delta_p,\gamma_p)} \Co^{OT}(M \text{ rel } \Delta_0, \dots, \Delta_p),\]
and \(Y_\bullet\) be given by 
\[Y_p := \coprod_{p+1 \text{ disjoint overtiwsted disks } (\Delta_0,\gamma_0), \dots , (\Delta_p,\gamma_p)} \AC(M \text{ rel } \Delta_0, \dots, \Delta_p).\]
It is known (see \cite{Fernandez-Gironella} page 191) that \(\text{AlmCont}(M)\) is naturally the section space of a certain fiber bundle on \(M\) with connected fiber \(SO(2n+1)/U(n)\), which comes from viewing \(\AC(M)\) as the space of reductions of the structure group of \(M\) to \(U(n) \times 1\). Also, if we have some collection of disjoint overtwisted disks \(\Delta_1, \dots, \Delta_k\), we can let \(A = \Delta_1 \cup \dots \cup \Delta_{k-1}\) and \(\Delta_k\) be the overtwisted disk, so the \(h\)-principle above implies an \(h\)-priniciple of the form required in Theorem \ref{Relative-H-Principle}. So, we just need to show that the collection of all overtwisted disks on \(M\) is sufficiently separated. Clearly such an embedded disk is closed, contractible and compact. Also, by finding a regular neighborhood, it is clear that any embedded \(2n\) - dimensional disk is a neighborhood deformation retract of a \(2n+1\) - dimensional ball containing it, so the inclusion is a cofibration and condition \((1)\) of Definition \ref{Sufficiently-Separated-Collection} is satisfied. Finally, it is clear that a finite collection of embedded overtwisted disks in \(M\) can't cover \(M\), so given some finite set of overtwisted disks in \(M\) there will always be some point \(m \in M\) that is not in any of them, and since an overtwisted disk is just some embedded disk with a local germ, we can always introduce a new overtwisted disk at the point \(m\) that doesn't intersect the rest of the overtwisted disks, and condition \((2)\) of Definition \ref{Sufficiently-Separated-Collection} is satisfied.
\end{proof}
\begin{remark}
This argument also shows that the map \(\Co(M) \to \AC(M)\) admits a section up to homotopy, but this section factors through the overtwisted contact structures. 
\end{remark}
\begin{remark}
Similar results should hold for manifolds with boundary, using that the \(h\)-principle given in \cite{Borman} works relative to any closed set \(A\) as long as \(M \setminus A\) is connected. 
\end{remark}

From Theorem \ref{Improved-Contact-Geometry} we get as an immediate consequence that \(\pi_k\text{AlmCont}(M)\) is isomorphic to a subgroup of \(\pi_k\Co^{OT}(M)\) for all \(k\). This is an improvement on the current overtwisted group \(OT_k(M)\) (see \cite{Casals} Proposition \(A.2\)), which gives an isomorphism between \(\pi_k\text{AlmCont}(M)\) and a subgroup of \(\pi_k \Co^{OT}(M)\) when \(1 \leq k \leq 2n\). In fact, when \(1 \leq k \leq 2n\) the image of \(\pi_k||X_\bullet||\) in \(\pi_k\Co^{OT}(M)\) is \(OT_k(M)\):
\begin{theorem}\label{Overtwisted-Group}
The overtwisted group \(OT_k(M)\) is the image of \(\pi_k||X_\bullet||\) induced by the natural forgetful map \(||X_\bullet|| \to \Co^{OT}(M)\) when \(1 \leq k \leq 2n\).
\end{theorem}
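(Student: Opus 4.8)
The plan is to compare the two subgroups of $\pi_k\Co^{OT}(M)$ directly by understanding what each represents geometrically. Recall that an element of $\pi_k||X_\bullet||$ at the level of $0$-simplices is, roughly, a family of overtwisted contact structures on $M$ together with a choice of overtwisted disk $(\Delta,\gamma)$ that is \emph{fixed} throughout the family; under the forgetful map $||X_\bullet|| \to \Co^{OT}(M)$ this maps to the underlying family of contact structures. On the other side, by the definition of the overtwisted group in \cite{Casals} (Proposition A.2), $OT_k(M)$ is precisely the image of those families of overtwisted contact structures which, after shrinking, are \emph{constant} (equal to the standard overtwisted germ) on a fixed ball $D \cong \bD^{2n+1}$ — equivalently, which extend a fixed contact germ near a point. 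So at the level of representatives the two constructions are describing the same thing: families rel a fixed disk. The content of the theorem is that this identification survives on homotopy groups, and that passing to $||X_\bullet||$ (which also allows higher simplices, i.e. families where the fixed disk is allowed to vary cell-by-cell) does not enlarge the image when $1 \le k \le 2n$.

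The key steps, in order, would be: \textbf{(1)} Unwind the definition of $OT_k(M)$ from \cite{Casals} and show that it equals the image of $\pi_k$ of the $0$-simplices $X_0 = \coprod_{(\Delta,\gamma)} \Co^{OT}(M \text{ rel } \Delta)$ under $X_0 \to ||X_\bullet|| \to \Co^{OT}(M)$ — this is essentially a matter of matching definitions, using that a fixed overtwisted disk determines a fixed germ near a point and conversely a fixed germ near a point can be thickened to an overtwisted disk (since the standard overtwisted germ contains such a disk). \textbf{(2)} Show the image of $\pi_k X_0$ and the image of $\pi_k ||X_\bullet||$ in $\pi_k\Co^{OT}(M)$ coincide in the range $1 \le k \le 2n$. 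For this, note that the augmentation $||X_\bullet|| \to \Co^{OT}(M)$ factors the inclusion, and the point is that any family $S^k \to ||X_\bullet||$ can be pushed, up to homotopy over $\Co^{OT}(M)$, into $X_0$. Here I would use the connectivity of the semi-simplicial space of overtwisted disks: the space of overtwisted disks in $M$, with its face maps, has highly connected geometric realization — concretely, fixing a point $m \in M$ away from everything in sight and introducing a disk there (exactly the move in the proof of Theorem \ref{Section-Space-Weak-Equivalence}) shows $||W^{OT}_\bullet||$ is $\infty$-connected, but more relevantly the \emph{fiber} of $||X_\bullet|| \to \Co^{OT}(M)$ over a point is the geometric realization of the semi-simplicial set of overtwisted disks for that fixed contact structure. \textbf{(3)} Invoke the computation, essentially due to \cite{Casals} / \cite{Fernandez-Gironella}, that the space of overtwisted disks \emph{compatible with a fixed overtwisted contact structure} is $2n$-connected (this is why $OT_k$ is only defined for $k \le 2n$): the standard overtwisted disk is unique up to contactomorphism, and the relevant space of parametrized embeddings has the homotopy type governed by this connectivity. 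Given that, a spectral sequence or obstruction argument for the map $||X_\bullet|| \to \Co^{OT}(M)$ with fiber in this range shows the image on $\pi_k$ is already realized on $0$-simplices for $k \le 2n$, and combined with step (1) this gives the claim.

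The main obstacle I expect is step (2)–(3): proving that the fiber of $||X_\bullet|| \to \Co^{OT}(M)$ over a fixed overtwisted $\xi$ is $2n$-connected. The fiber is the geometric realization of the semi-simplicial set $p \mapsto \{(p{+}1) \text{ disjoint overtwisted disks for } \xi\}$, and one wants to say this is equivalent to a single copy of the (contractible, or $2n$-connected) space of overtwisted disks for $\xi$ — i.e. that the "disjoint configurations" semi-simplicial object resolves the one-disk space. This is a Čech-type / nerve argument: disjoint overtwisted disks for a fixed $\xi$ behave like a "covering" family and their nerve should compute the one-point space up to the connectivity of the disk space itself. Making this precise — and pinning down exactly why the range is $2n$ rather than all $k$ — is where the real work lies; everything else is bookkeeping with definitions and the already-established Theorem \ref{Section-Space-Weak-Equivalence}. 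One clean way to organize it: show the forgetful map from $||X_\bullet||$ restricted over $\xi$ to the one-disk space $\{(\Delta,\gamma) : \Delta \text{ overtwisted for } \xi\}$ is a weak equivalence (a standard "extra degeneracy after augmentation" / bar-resolution argument once one checks the disjointness hypotheses can always be met, which they can since finitely many disks never cover $M$), and then quote the connectivity of that one-disk space from the literature.
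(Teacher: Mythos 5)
There is a genuine gap, and it sits exactly where you predicted: steps (2)--(3). Your plan is to control \(||X_\bullet|| \to \Co^{OT}(M)\) by its fiber over a fixed \(\xi\) (the realization of the semi-simplicial set of disjoint overtwisted disks for \(\xi\)) and then quote a \(2n\)-connectivity statement for the space of overtwisted disks compatible with \(\xi\). Two problems. First, \(||X_\bullet|| \to \Co^{OT}(M)\) is nowhere near a fibration or quasi-fibration: the set of disks that are overtwisted for \(\xi\) changes discontinuously as \(\xi\) varies, so even if the point-set fiber were contractible, no spectral sequence or obstruction argument applies without substantial additional input. Second, the connectivity of the space of overtwisted disks for a fixed \(\xi\) is not in the literature in the form you need and is itself a hard parametric statement; it cannot be quoted, and the range \(1 \le k \le 2n\) in the definition of \(OT_k(M)\) does not come from such a computation. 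Relatedly, your step (1) depends on reading \(OT_k(M)\) as ``families constant on a fixed ball''; the definition actually in play is via a \emph{certificate of overtwistedness}, i.e.\ a continuously varying family \(\Delta: S^k \to \text{Emb}(\bD^{2n},M)\) of disks, and identifying that with the image of \(\pi_k X_0\) (constant disk) is not a matter of matching definitions --- it is essentially the content of the theorem.

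The paper takes a different and more robust route that avoids any connectivity computation for disk spaces. It introduces ``continuous'' semi-simplicial spaces \(X_\bullet^c\), \(Y_\bullet^c\) in which the disks are points of \(\text{Emb}(\bD^{2n},M)\) and hence may vary in families; a certificate of overtwistedness then tautologically produces a class in \(\pi_k||X_\bullet^c||\) lifting \(\alpha\). The map \(||X_\bullet^c|| \to ||Y_\bullet^c||\) is shown to be a weak equivalence levelwise using the \emph{foliated} (parametric) \(h\)-principle of Borman--Eliashberg--Murphy (their Theorem 1.6), and \(||Y_\bullet^c|| \to \AC(M)\) is shown to be a weak equivalence by a variant of Theorem \ref{Section-Space-Weak-Equivalence}. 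Combining with \(||X_\bullet|| \xrightarrow{\simeq} ||Y_\bullet|| \xrightarrow{\simeq} \AC(M)\), one deduces \(||X_\bullet|| \to ||X_\bullet^c||\) is a weak equivalence, hence \(OT_k(M) \subseteq \mathrm{im}(\pi_k||X_\bullet||)\); equality then follows from the purely formal observation that both subgroups map isomorphically onto \(\pi_k\AC(M)\) under the same map \(\Co^{OT}(M) \to \AC(M)\). If you want to salvage your approach, the missing ingredient is precisely this foliated \(h\)-principle input; without it, the ``nerve of disjoint configurations'' argument has no way to compare varying families of disks with constant ones.
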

Before we can prove this, we need to define some intermediary spaces that will help us understand the relationship between \(\pi_k||X_\bullet||\) and \(OT_k(M)\). Recall that an element of \(||X_\bullet||\) is a contact structure, along with a list of disks and weights, such that each disk is overtwisted for the contact structure as long as its weight is nonzero. Since we defined this for an arbitrary space of holonomic solutions and an arbitrary sufficiently separated collection, we didn't make use of any topology on the space of disks. So, in \(||X_\bullet||\) the fixed disks aren't allowed to move through \(M\), and the only way to change the disks is to introduce a new overtwisted disk somewhere, or delete a disk by letting its weight go to zero. However, the space of overtwisted disks does have a topology coming from the space of embeddings of \(\bD^{2n}\) into \(M\). We can use this to define new semi-simplicial spaces that are more clearly related to \(OT_k(M)\). 
\begin{remark}
Strictly speaking, overtwisted disks are only piecewise smooth, so instead of embeddings of the standard disk into \(M\) we want to take a specific piecewise structure coming from the model overtwisted disk (see \cite{Borman} Definition \(3.6\)) and consider the space of embeddings of this into \(M\) that preserve the piecewise smooth structure. However, none of our arguments depend on this distinction, so by abuse of notation we just denote this space as \(\text{Emb}(\bD^{2n},M)\).
\end{remark}
\begin{definition}
Let \(X_\bullet^c\) be the semi-simplicial space defined by 
\[X_p^c \subset  \Co^{OT}(M) \times \text{Emb}(\bD^{2n},M)^{p+1}\] 
is the set of all \((\xi,\Delta_0,\dots,\Delta_p)\) such that each \(\Delta_i\) is an overtwisted disk for \(\xi\), \(\Delta_i, \Delta_j\) are disjoint when \(i \neq j\). The face maps \(d_i\) are given by forgetting the \(i\)th disk. 
\end{definition}

Similarly, let \(Y_\bullet^c\) be the same definition except with \(\AC(M)\) instead of \(\Co^{OT}(M)\). The geometric realizations of these differ from \(||X_\bullet||\) and \(||Y_\bullet||\) since continuous maps into \(||X_\bullet^c||\) and \(||Y_\bullet^c||\) can also deform disks along families inside of \(M\). For example, if \(\alpha:S^k \to \Co^{OT}(M)\) is a family of contact structures, and \(\Delta: S^k \to \text{Emb}(\bD^{2n},M)\) is a certificate of overtwistedness for \(\alpha\), then \((\alpha,\Delta,1)\) is naturally a continuous map \(S^k \to ||X_\bullet^c||\). Furthermore, since constant embeddings are still continuous embeddings, there are natural maps \(||X_\bullet|| \to ||X_\bullet^c||\) and \(||Y_\bullet|| \to ||Y_\bullet^c||\) given by viewing the fixed disks as constant embeddings. Then we have the following commutative diagram. 

\[\begin{tikzcd}
||||X_\bullet|| \arrow[d] \arrow[r] & ||||X_\bullet^c|| \arrow[d] \arrow[r] & \text{Cont}^{OT}(M) \arrow[d] \\
||||Y_\bullet|| \arrow[r]           & ||||Y_\bullet^c|| \arrow[r]           & \text{AlmCont(M)}       
\end{tikzcd}\]

We already know the map \(||X_\bullet|| \to ||Y_\bullet||\) is a weak equivalence by the \(h\)-principle given in \cite{Borman}. Also, we have the following lemma, which is a direct consequence of \cite{Borman}.
\begin{lemma}\label{Foliated_h_Principle}
The map \(||X_\bullet^c|| \to ||Y_\bullet^c||\) induced by \(\Co^{OT}(M) \to \AC(M)\) is a weak equivalence.
\end{lemma}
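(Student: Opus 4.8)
The plan is to reduce the statement, one semisimplicial degree at a time, to the relative \(h\)-principle of \cite{Borman}, using that fat geometric realization of semi-simplicial spaces sends levelwise weak equivalences to weak equivalences. Since \(\partial\Delta^p \hookrightarrow \Delta^p\) is a cofibration, the skeletal filtrations of \(||X_\bullet^c||\) and \(||Y_\bullet^c||\) are built by iterated pushouts along the cofibrations \(X_p^c \times \partial\Delta^p \hookrightarrow X_p^c \times \Delta^p\), so \(||X_\bullet^c|| \to ||Y_\bullet^c||\) is a weak equivalence as soon as each \(X_p^c \to Y_p^c\) is one (see \cite{Randal-Williams}). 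Thus everything reduces to the fixed-\(p\) statement.

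To prove that \(X_p^c \to Y_p^c\) is a weak equivalence, I would view both spaces as fibered over the open subspace \(\text{Emb}_\bot^{p+1} \subset \text{Emb}(\bD^{2n},M)^{p+1}\) of \((p+1)\)-tuples of model-structured disks with pairwise disjoint images, via the maps forgetting the (almost) contact structure. These forgetful maps are fiber bundles: locally over \(\text{Emb}_\bot^{p+1}\) the isotopy extension theorem provides a continuous family of ambient diffeomorphisms of \(M\) carrying one configuration to the neighbouring ones, and pushing a contact (respectively almost contact) structure forward along a diffeomorphism of \(M\) preserves the condition ``\(\Delta_i\) is overtwisted'', the diffeomorphism carrying the prescribed overtwisted germ near the old \(\Delta_i\) to the prescribed germ near the new one. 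The fiber of \(X_p^c \to \text{Emb}_\bot^{p+1}\) over \((\Delta_0,\dots,\Delta_p)\) is exactly the space of contact structures for which each \(\Delta_i\) is an overtwisted disk, i.e.\ \(\Co^{OT}(M \text{ rel } \Delta_0,\dots,\Delta_p)\) (a contact structure agreeing with the fixed overtwisted germ near each \(\Delta_i\) is automatically overtwisted), and the corresponding fiber of \(Y_p^c\) is \(\AC(M \text{ rel } \Delta_0,\dots,\Delta_p)\). Taking \(A = \Delta_0 \cup \dots \cup \Delta_{p-1}\) (whose complement is connected, as each \(\Delta_i\) is a \(2n\)-disk with boundary in the closed connected \((2n+1)\)-manifold \(M\)), \(\xi_0\) the fixed germ near \(A\), and \(\Delta_p\) the distinguished overtwisted disk, this map of fibers is precisely the relative \(h\)-principle map quoted from \cite{Borman}, hence a weak equivalence. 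A map of fiber bundles over a common base that is the identity on the base and a weak equivalence on each fiber is a weak equivalence on total spaces (compare the long exact sequences in homotopy over each path component of \(\text{Emb}_\bot^{p+1}\)), so \(X_p^c \to Y_p^c\) is a weak equivalence, completing the argument.

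The step I expect to be the main obstacle is the claim that \(X_p^c \to \text{Emb}_\bot^{p+1}\) is a fiber bundle: the ambient isotopies must be chosen so as to carry the (only piecewise smooth) model overtwisted germ correctly, which in practice means thickening each model disk to a codimension-zero embedding and then applying the parametrized isotopy extension theorem to those, exactly the sort of bookkeeping flagged in the Remark above. An approach avoiding fiber bundles is to argue directly with the relative homotopy groups of \(X_p^c \to Y_p^c\): a class is represented by a family of almost contact structures over \(\bD^n\) together with a family of disjoint disk configurations; since \(\bD^n\) is contractible, parametrized isotopy extension homotopes the configuration family to a constant one; the relative \(h\)-principle of \cite{Borman} rel the now-fixed disks then deforms the almost contact structures to genuine contact structures rel \(S^{n-1}\); and one transports the straightening back.
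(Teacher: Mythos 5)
Your proof is correct in outline but takes a genuinely different route from the paper's. Both arguments first reduce, via fat geometric realization of a levelwise weak equivalence, to showing that \(X_p^c \to Y_p^c\) is a weak equivalence. The paper then invokes the \emph{foliated} \(h\)-principle of \cite{Borman} (Theorem 1.6): a relative class \(\alpha = (\xi,\Delta_0,\dots,\Delta_p):\bD^k \to Y_p^c\) is reinterpreted as a leafwise almost contact structure on \(V = M \times \bD^k\) with overtwisted basis the \emph{moving} disk families \(h_i = \Delta_i\), and the \(\pi_0\)-statement of that theorem produces the lift rel boundary directly, so the varying disks are absorbed into the hypotheses of the quoted theorem. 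You instead use only the non-foliated relative \(h\)-principle (Theorem 1.2 of \cite{Borman}, the same input as Theorem \ref{Improved-Contact-Geometry}) and handle the moving disks yourself by parametrized isotopy extension, either packaged as a map of fiber bundles over the configuration space of disjoint embedded disks, or, in your fallback, by straightening the disk family over the contractible parameter disk before applying the \(h\)-principle rel the now-fixed disks. Both routes work; yours needs a weaker black box but pays for it with the isotopy-extension step for the piecewise-smooth model disks, which you correctly flag as the delicate point (one must thicken to smooth regular neighbourhoods carrying the model germ, and both your main argument and your fallback still need this), whereas the paper's use of the foliated theorem sidesteps isotopy extension entirely since Theorem 1.6 is stated precisely for leafwise-varying families of overtwisted disks. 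If you keep the fiber-bundle formulation, state explicitly that the conclusion follows from the five lemma applied to the two long exact sequences over a point in each path component of the base.
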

\begin{proof}
First, \(X_p^c \to Y_p^c\) is a weak equivalence as a direct consequence of Theorem 1.6 of \cite{Borman}. Indeed, suppose we have the following diagram: 
\[\begin{tikzcd}
S^{k-1} \arrow[d] \arrow[r, "\partial \alpha"] & X_p^c \arrow[d] \\
\mathbb{D}^k \arrow[r, "\alpha"]               & Y_p^c          
\end{tikzcd}\]
where \(\alpha(t) = (\xi(t),\Delta_0(t),\dots,\Delta_p(t))\). First, we can homotope \(\alpha\) relative to the boundary by extending the boundary to an annulus, so that \(\alpha\) is genuine on a neigborhood of the boundary. Then we can consider \(V = M \times \bD^k\), so that \(\xi\) can be viewed as a leafwise contact structure on \(V\). If we let \(A = S^{k-1} \times M \subset V\), \(\xi_0 = \xi|_{A}\), and \(h_i = \Delta_i\) for \(0 \leq i \leq p\), we have that 
\[\xi \in \AC(V;A,\xi_0,h_0,\dots,h_p),\]
where \(\AC(V;A,\xi_0,h_0,\dots,h_p)\) is the space of leafwise almost contact structures that agree with \(\xi_0\) near \(A\), with overtwisted basis \(\{h_i\}_{i=0}^p\) (see \cite{Borman} Theorem 1.6 and definitions immediately preceding). Then \(\xi\) is a representative of an element in \(\pi_0\AC(V;A,\xi_0,h_0,\dots,h_p)\).
But, if \(\Co(V;A,\xi_0,h_0,\dots,h_p)\) is the space of  leafwise contact structures that agree with \(\xi_0\) near \(A\), with overtwisted basis \(\{h_i\}_{i=0}^p\), then by \cite{Borman} Theorem 1.6 we have that 
\[\pi_0 \Co(V;A,\xi_0,h_0,\dots,h_p) \to \pi_0\AC(V;A,\xi_0,h_0,\dots,h_p)\]
is an isomorphism, so there is some \(\tilde{\xi} \in \Co(V;A,\xi_0,h_0,\dots,h_p)\) and a path from \(\xi\) to \(\tilde{\xi}\) in \(\AC(V;A,\xi_0,h_0,\dots,h_p)\). But, a path in this space is a homotopy of \(\xi\) relative to the boundary and relative to the families of overtwisted disks \(\Delta_0,\dots,\Delta_p\). Clearly such a homotopy is a homotopy of \(\alpha:\bD^k \to Y_p^c\) relative to the boundary to a map \(\beta = (\tilde{\xi},\Delta_0,\dots,\Delta_p)\), which lifts to \(X_p^c\). So indeed \(X_p^c \to Y_p^c\) is a weak equivalence and so \(||X_\bullet^c|| \to ||Y_\bullet^c||\) is also a weak equivalence. 
\end{proof}

Furthermore, we have the following which relates \(||Y_\bullet^c||\) to \(\AC(M)\):
\begin{lemma}\label{Continuous-Section-Space-Weak-Equivalence}
The map \(||Y_\bullet^c|| \to \AC(M)\) is a weak equivalence.
\end{lemma}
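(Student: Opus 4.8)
The plan is to show directly that the relative homotopy groups of the augmentation $\epsilon\colon ||Y_\bullet^c|| \to \AC(M)$ (which forgets the disks) vanish, running the same argument as in the proof of Theorem \ref{Section-Space-Weak-Equivalence} but carefully tracking the fact that the overtwisted disks recorded by a $p$-simplex of $Y_\bullet^c$ now vary continuously. Recall from the proof of Theorem \ref{Improved-Contact-Geometry} that $\AC(M)$ is the section space $\Gamma(E)$ of a fibre bundle $E\to M$ with path-connected fibre $SO(2n+1)/U(n)$, and that the collection of all overtwisted disks is sufficiently separated. Given a square consisting of $\alpha\colon \bD^n\to\AC(M)$ together with a partial lift $\partial\alpha\colon S^{n-1}\to ||Y_\bullet^c||$ of $\alpha|_{S^{n-1}}$, I would first compress $\alpha$ to $\alpha_1$ exactly as there, so that on an outer annulus $\alpha_1$ is radially constant equal to $\alpha(p/|p|)$ and already lifts via $\partial\alpha$.

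The next step is to produce an auxiliary overtwisted disk $(\Delta,\gamma)$ with a ball neighbourhood $D\cong\bD^{2n+1}$ disjoint from the union $\tilde W$ of all positively-weighted disks occurring over $S^{n-1}$, and then to copy the end of the proof of Theorem \ref{Section-Space-Weak-Equivalence} verbatim: homotope $\alpha_1$ rel boundary, supported in $D$, to an $\alpha_2$ agreeing with $\gamma$ near $\Delta$ for $|p|\le\tfrac12$ — using that $E$ is trivial over the contractible $\bD^n\times D$, that $SO(2n+1)/U(n)$ is connected, and that $\Delta\hookrightarrow D$ is a closed cofibration — and then define $\beta\colon \bD^n\to ||Y_\bullet^c||$ by putting $(\Delta,\gamma)$ at full weight for $|p|\le\tfrac12$, interpolating its weight against $\overrightarrow{w}_{p/|p|}$ on the transition annulus, and using $\partial\alpha$ near $|p|=1$. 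Since $D$ is disjoint from $\tilde W$, the homotopy does not disturb any germ condition recorded by $\partial\alpha$, and $\Delta$ is automatically disjoint from every disk of every $\overrightarrow{w}_{p/|p|}$, so the disjointness requirement on simplices of $Y_\bullet^c$ is met.

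The main obstacle, and the only essential difference from Theorem \ref{Section-Space-Weak-Equivalence}, is the existence of such a $(\Delta,\gamma)$. In the earlier theorem $\tilde W$ was a \emph{finite} union of individual disks (the disks were not topologised), so condition (2) of a sufficiently separated collection applied directly; here the disks move, and although Lemma $3.6$ of \cite{Strickland} shows the image of $S^{n-1}$ in $||Y_\bullet^c||$, hence in $||W_\bullet^c||$, lands in a finite simplicial skeleton so that only boundedly many disks occur over each point, these disks still sweep out a genuinely continuous family. To handle this I would insert a preliminary modification: homotope $\partial\alpha$ within $||Y_\bullet^c||$ over an auxiliary collar by realising, via the isotopy extension theorem, an ambient isotopy of $M$ that pushes each configuration $\overrightarrow{w}_q$ off a chosen small ball $B_0$ and carries $\alpha$ along by the same isotopy, so that afterwards the relevant $\tilde W$ misses $B_0$; one then takes $D\subset B_0$. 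Showing that the finitely-many-at-a-time families of disjoint overtwisted disks occurring over the compact set $S^{n-1}$ can be swept off a common ball in this parametric fashion is the technical heart of the argument; I expect it to follow from a parametrised general-position/engulfing argument together with isotopy extension, using only that $M$ is connected.

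Once $\beta$ is constructed we have $\epsilon\circ\beta=\alpha_2\simeq\alpha$ rel $S^{n-1}$ and $\beta|_{S^{n-1}}=\partial\alpha$, so the relative homotopy groups vanish and $\epsilon$ is a weak equivalence. As a sanity check one can also phrase the reduction as $2$-out-of-$3$: the composite $||Y_\bullet||\to ||Y_\bullet^c||\to\AC(M)$ is the weak equivalence of Theorem \ref{Section-Space-Weak-Equivalence}, so it suffices to see that $||Y_\bullet||\to ||Y_\bullet^c||$ is a weak equivalence, i.e.\ that topologising the space of disks does not change the homotopy type of the resolution — which is proved by the same compression-and-sweep argument.
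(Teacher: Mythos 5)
You have correctly isolated the one point where the argument of Theorem \ref{Section-Space-Weak-Equivalence} breaks down: because the disks recorded by \(||Y_\bullet^c||\) vary continuously, the union \(\tilde W\) of all positively-weighted disks occurring over the boundary sphere may cover all of \(M\), so no single auxiliary overtwisted disk disjoint from \(\tilde W\) need exist. But your proposed fix --- a parametrized ambient isotopy sweeping every configuration \(\overrightarrow{w}_q\) off a fixed ball \(B_0\) --- is precisely the step you do not prove, and it is not a routine general-position statement. The disks have codimension one; for each fixed \(q\) they miss some point \(m_q\), but \(m_q\) cannot be chosen independently of \(q\), and upgrading this to a coherent isotopy over the whole compact parameter space that clears a \emph{common} ball is a genuine local-to-global problem, not an application of general position plus isotopy extension. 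It is further complicated by the fact that the isotopy must be defined compatibly with the face identifications of the geometric realization (the number of disks jumps as weights vanish) and must simultaneously transport the almost contact structures so as to preserve the germ conditions. As written, the ``technical heart'' of your argument is an unproved claim at least as hard as the lemma itself.

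The paper resolves the difficulty differently, and the difference is essential. Rather than seeking one global auxiliary disk, it covers the parameter sphere by finitely many small open sets \(U_a \subset V_a\) chosen so that the disks occurring over \(V_a \cap A\) fail to cover \(M\); for each \(a\) it picks an auxiliary overtwisted disk \(\Delta_a\) with regular neighbourhood \(D_a\) disjoint from those disks and from \(\Delta_1,\dots,\Delta_{a-1}\), homotopes \(\alpha\) with support in \(V_a \times D_a\) (via the homotopy extension property) so that \(\Delta_a\) becomes overtwisted for the modified structure over \(U_a\), and finally assembles the lift \(\beta\) by using a partition of unity subordinate to \(U_0,\dots,U_j,A\) as the simplex weights. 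The semisimplicial structure is exactly what allows several auxiliary disks with varying weights to coexist in a single simplex, so no disk disjoint from all of \(\tilde W\) is ever required; this local-cover-plus-partition-of-unity step is the new idea relative to Theorem \ref{Section-Space-Weak-Equivalence}, and it is absent from your proposal. Your closing two-out-of-three remark is also circular as a proof strategy: in the paper the equivalence \(||Y_\bullet|| \to ||Y_\bullet^c||\) is \emph{deduced} from the present lemma together with Theorem \ref{Section-Space-Weak-Equivalence}, not used to establish it.
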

\begin{proof}
The proof is similar to the proof of Theorem \ref{Section-Space-Weak-Equivalence}, so we will omit some technical details that were included there. Let \(\alpha:\bD^k \to \AC(M)\) be a continuous map such that we have the following diagram
\[\begin{tikzcd}
S^{k-1} \arrow[r, "\partial \alpha"] \arrow[d] & ||||Y_\bullet^c|| \arrow[d] \\
\bD^k \arrow[r, "\alpha"]                        & {\AC(M)}             
\end{tikzcd}\]
where by abuse of notation \(\partial \alpha\) is the map \(S^{k-1} \to ||Y_\bullet^c||\), \( p \mapsto (\alpha(p), \overrightarrow{w_p}, \overrightarrow{t_p})\), where \(\overrightarrow{w_p} = (w_1(p), \dots w_\ell(p))\) is a finite ordered set of overtwisted disks for \(\alpha(p)\) and \(\overrightarrow{t_p} = (t_1(p), \dots, t_\ell(p))\) is their corresponding weights. We can assume all of these weights appearing are nonzero. There is only one part of the proof of \ref{Section-Space-Weak-Equivalence} that doesn't go through immediately, which is finding a disk that is disjoint from all the disks \(w_i(p)\) for all \(p,i\). The problem is that since the embedded disks are no longer locally constant but rather can vary from point to point, we have \(S^{k-1}\) families of disks instead of finitely many, so it is possible that they cover all of \(M\). However, we can get around this as follows. First, we can make a buffer away from the boundary by replacing \(\alpha\) with a radial compression to the disk of radius \(\frac{1}{2}\), which is homotopic to \(\alpha \) relative to the boundary. Again by abuse of notation we will let \(\alpha\) denote this new map. Let \(A\) be the annulus of radius \(\frac{1}{2}\), so now the map \(\alpha\) lifts to \(||Y_\bullet^c||\) in \(A\). Let \(S^{k-1}_{\frac{1}{2}}\) be the sphere of radius \(\frac{1}{2}\), and for each \(p \in S^{k-1}_{\frac{1}{2}}\) let \(V_p\) be a sufficiently small open neighborhood of \(p \in \bD^k\) such that there is some \(m \in M\) so that \(m\) is not contained in any \(w_i(q)\) for any \(q \in V_p \cap A\) or \(1 \leq i \leq \ell(p)\). Furthermore, we can let \(U_p \subset V_p\) be a slightly smaller neighorhood of \(p\) that is buffered from \(\partial V_p\) by some \(\epsilon\) neighborhood. Since spheres are compact, we can find a finite subcover by these smaller neighborhoods, \(U_1, \dots, U_j\), which also gives us a cover by the larger neighborhoods \(V_1,\dots, V_j\). By construction the disks \(w_i(q)\) for \(q \in V_a \cap A\) don't cover \(M\) for any given \(1 \leq a \leq j\), so in particular we can find an embedded disk \(\Delta_1 \subset M\) and a regular neighborhood \(D_1\) of \(\Delta_1\) in \(M\), such that \(D_1\) is disjoint from all such disks \(w_i(q)\), \(q\in V_1 \cap A\). Finally, we can pick some \(V_0 \subset int(\bD^k_{\frac{1}{2}})\) and slightly smaller \(U_0\) so that \(U_1, \dots, U_j,U_0,A\) cover \(\bD^k\). With all of this set up, we can now do the following. \(U_1 \times \Delta_1\) is contractible, so if we restrict \(\alpha \) to this we can homotope it to agree with the overtwisted germ that comes with \(\Delta_1\). Also, we can use the homotopy extension property to extend this homotopy to one on \(V_1 \times D_1\), such that on \(\partial D_1\), \(\partial V_1\) the homotopy is just \(\alpha\). Then we can extend the homotopy by \(\alpha\) to all of \(\bD^k\), \(M\) so that we have a new map \(\alpha_1: \bD^k \to \AC(M)\) that is homotopic to \(\alpha\), agrees with \(\alpha\) away from \(V_1 \times D_1\), and satisfies that \(\Delta_1\) is overtwisted for \(\alpha_1(p)\) for all \(p \in U_1\). Also, \(w_i(q)\) is still overtwisted for \(\alpha_1(q)\) for all \(q \in A\), since in \(V_1\), \(\Delta_1\) is away from all of the \(w_i(q)\), and outside of \(V_1\), \(\alpha_1 = \alpha\). We can repeat this on \(U_2\), now being careful to choose \(\Delta_2,D_2\) so that \(D_2\) is disjoint from \(\Delta_1\) as well as \(w_i(q)\) for \(q \in V_2 \cap A\). Then we can find \(\alpha_2\) homotopic to \(\alpha_1\) so that \(\alpha_2 = \alpha_1\) outside of \(V_2 \times D_2\) and \(\Delta_2\) is overtwisted for \(\alpha_2(p)\) for all \(p \in U_2\). So, by the same reasoning \(\Delta_1\) is still overtwisted for \(\alpha_2(p)\) for \(p \in U_1\) and \(w_i(q)\) is still overtwisted for \(\alpha_2(q)\) for all \(q \in A\). Repeat this for \(U_3,\dots,U_j\), and get \(\alpha_j\), which still has the same overtwisted disks in \(A\) as well as \(\Delta_a\) is overtwisted for \(\alpha_j\) on \(U_a\). Finally, since \(V_0\) is disjoint from the annulus \(A\) by definition, on \(U_0\) we just need to find an overtwisted disk \(\Delta_0\) with regular neighborhood \(D_0\) disjoint from \(\Delta_1, \dots, \Delta_j\), which we can always do. Then do the same homotopy trick as before to get \(\alpha_0\) that agrees with \(\alpha_j\) outside of \( V_0 \times D_0\) and has overtwisted disk \(\Delta_0\) on \(U_0\). Finally, let \(s_0,\dots, s_j,s_A\) be a partition of unity subordinate to the cover \(U_0,U_1, \dots, U_j, A\). Then we have a map \(\beta: \bD^k \to ||Y_\bullet^c||\) given by 
\[p \mapsto (\alpha_0(p), \Delta_0,\Delta_1,\dots,\Delta_j,\overrightarrow{w_\frac{p}{|p|}}, s_0(p), s_1(p), \dots, s_j(p), s_A(p)\overrightarrow{t_{\frac{p}{|p|}}}).\]
By construction the specified disk is an overtwisted disk at \(p\) precisely when its weight is nonzero, all of the weights add up to one, and this is clearly a lift of \(\alpha_0\) which is homotopic to \(\alpha\) relative to the boundary. 
\end{proof}
Using this lemma, we can now prove Theorem \ref{Overtwisted-Group}, i.e. we can prove that \(OT_k(M)\) is the image of \(\pi_k||X_\bullet||\) induced by the natural forgetful map \(||X_\bullet|| \to \Co^{OT}(M)\) when \(1 \leq k \leq 2n\):
\begin{proof}
Recall the commuative diagram relating the different semi-simplicial spaces:
\[\begin{tikzcd}
||||X_\bullet|| \arrow[d] \arrow[r] & ||||X_\bullet^c|| \arrow[d] \arrow[r] & \text{Cont}^{OT}(M) \arrow[d] \\
||||Y_\bullet|| \arrow[r]           & ||||Y_\bullet^c|| \arrow[r]           & \text{AlmCont(M)}       
\end{tikzcd}\]
First, we will show that \(OT_k(M)\) is a subgroup of the image of \(\pi_k||X_\bullet||\). By abuse of notation we will use specific representatives of elements of homotopy groups when we mean their homotopy classes, so we are really working up to homotopy relative to the basepoint. Let \(\alpha: S^k \to \Co^{OT}(M)\) be an element of \(OT_k(M)\) and \(\Delta: S^k \to Emb(\bD^{2n},M)\) be a certificate of overtwistedness for \(\alpha\). Then \((\alpha,\Delta,1) \in \pi_k||X_\bullet^c||\) maps to \(\alpha\). However, we know that the maps \(||X_\bullet|| \to ||Y_\bullet||\) and \(||X_\bullet^c|| \to ||Y_\bullet^c||\) in the above diagram are weak equivalences. Furthermore, by the previous lemma \(||Y_\bullet^c|| \to \AC(M)\) is a weak equivalence, and since we know that \(||Y_\bullet|| \to \AC(M)\) is also weak equivalence by Theorem \ref{Section-Space-Weak-Equivalence} we have that \(||Y_\bullet|| \to ||Y_\bullet^c||\) is a weak equivalence. Combining these equivalences with the previous commutative diagram, we have that \(||X_\bullet|| \to ||X_\bullet^c||\) is a weak equivalence, and so there exists some \(\beta \in \pi_k||X_\bullet||\) such that \(\beta \mapsto (\alpha,\Delta,1)\) and hence maps to \(\alpha\). So indeed,  \(OT_k(M)\) is a subgroup of the image of \(\pi_k||X_\bullet||\). However, we know that the isomorphism \(OT_k(M) \to \pi_k\AC(M)\) and the isomorphism \(\pi_k||X_\bullet|| \to \pi_k \AC(M)\) are both induced by the natural inclusion \(\Co^{OT}(M) \to \AC(M)\), so we have a group isomorphism that remains an isomorphism when restricted to a subgroup. This is only possible if the subgroup is the whole group, so indeed \(OT_k(M)\) is this image.  
\end{proof}

\section{Infinite Cyclic Subgroups in the Homotopy Groups of the Contactomorphism Group}
\label{sec:five}

We can now use Theorem \ref{Improved-Contact-Geometry} to generalize the results from \cite{Fernandez-Gironella}.
Let \((M,\xi_{OT})\) be an overtwisted, closed, cooriented contact manifold of dimension \(2n+1\) and let \(\cC(M,\xi_{OT})\) be the contactomorphism group of \((M,\xi_{OT})\), i.e. all diffeomorphisms of \(M\) that preserve the contact structure \(\xi_{OT}\). Then from \cite[Lemma 1.1]{Giroux}  we have a fiber sequence
\[\cC_0(M,\xi_{OT}) \to \D_0(M) \to \Co(M),\]
where \(\D_0(M) \to \Co(M)\) is given by \(f \mapsto f^{\star} \xi_{OT}\),
which induces a long exact sequence of homotopy groups.
\begin{remark}
In the literature the fibration is given by pushforward not pullback, i.e. the map \(f \mapsto f_\star \xi_{OT}\). While this may be more natural geometrically, since we are using diffeomorphisms pullback is just pushforward by the inverse, so our map is still a fibration. Also, we will see later that it is convenient to factor this map through something more general, where pushforward is no longer well defined but pullback is.
\end{remark} 
We would like to find infinite cyclic subgroups inside \(\pi_k(\cC_0(M,\xi_{OT}),id)\), i.e. we want to find nonzero elements of the rational homotopy groups of \(\cC_0(M,\xi_{OT})\). To do this, we will prove a few lemmas. Let \(\text{Bun}(TM)\) denote the space of all pairs \((f,\delta f) \), where \(f:M \to M\) is a continuous map and \(\delta f: TM \to f^{\star}TM\) is some vector bundle map over \(M\) that is a fiberwise isomorphism, i.e. the space of bundle isomorphisms of \(TM\). 

\begin{lemma}\label{Diffeomorphism-Action-Factors}
The map 
\[\D_0(M) \to \Co(M) \to \text{AlmCont}(M)\]
factors through the space of bundle isomorphisms of \(TM\) as follows:
\[\begin{tikzcd}
\text{Diff}_0(M) \arrow[d] \arrow[rrr, "f \mapsto f^{\star}\xi_{OT}"]  &  &  & {\text{Cont}(M,\xi_{OT})} \arrow[d] \\
\text{Bun}(TM) \arrow[rrr, "{(f,\delta f) \mapsto (f,\delta f)^{\star}\xi_{OT}}"] &  & & \text{AlmCont}(M)                  
\end{tikzcd},
\]
where the left vertical map is the derivative \(f \mapsto (f,df)\), and \((f ,\delta f)^* \xi_{OT}\) is the almost contact structure obtained by realizing \(f^*\xi_{OT} \subset f^*TM\) as a sub-bundle of \(TM\) via the isomorphism \(\delta f: TM \to f^*TM\). 
\end{lemma}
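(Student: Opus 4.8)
The plan is to prove this by a direct unwinding of definitions: the almost contact structure underlying a contact structure is built only from the contact hyperplane field together with its conformal symplectic structure and coorientation, and each of these is carried along by a diffeomorphism \(f\) using nothing more than the derivative \(df\), viewed as a bundle isomorphism \(TM \to f^{\star}TM\). Concretely, I would first recall that the map \(\Co(M) \to \text{AlmCont}(M)\) sends a cooriented contact structure \(\eta = \ker\beta\) to the triple \((\eta, J_\eta, R_\eta)\), where \(J_\eta\) is an almost complex structure on \(\eta\) compatible with the conformal symplectic structure \([d\beta|_\eta]\) and \(R_\eta\) is an associated coorienting trivialized line field (e.g.\ the Reeb field of \(\beta\)); the space of admissible pairs \((J_\eta, R_\eta)\) is contractible, so this recipe is well defined up to a contractible space of choices.

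Next I would check that each piece of this data transports correctly. For \(f \in \D_0(M)\), the contact structure \(f^{\star}\xi_{OT} = \ker(f^{\star}\alpha_{OT})\) has fiber \((df_x)^{-1}(\xi_{OT, f(x)})\) over \(x \in M\), and this is by definition the hyperplane field obtained by realizing the sub-bundle \(f^{\star}\xi_{OT} \subset f^{\star}TM\) inside \(TM\) via \(df\), i.e.\ the hyperplane field of \((f,df)^{\star}\xi_{OT}\). Since \(d(f^{\star}\alpha_{OT}) = f^{\star}(d\alpha_{OT})\), the bundle map \(df\) carries the conformal symplectic structure on \(f^{\star}\xi_{OT}\) onto \([d\alpha_{OT}|_{\xi_{OT}}]\), so \((df)^{-1}\circ (f^{\star}J_{OT})\circ df\) is a compatible complex structure on \(f^{\star}\xi_{OT}\); likewise \((df)^{-1}(f^{\star}R_{OT})\) is the Reeb field of \(f^{\star}\alpha_{OT}\), hence an admissible coorienting line field. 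With these choices the triple assigned to \(f^{\star}\xi_{OT}\) is exactly \((f,df)^{\star}(\xi_{OT}, J_{OT}, R_{OT})\), which is the definition of the composite \(\D_0(M) \to \text{Bun}(TM) \to \text{AlmCont}(M)\), so the square commutes with these choices.

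The one point that needs care is that \(\Co(M) \to \text{AlmCont}(M)\) is canonical only after a continuous family of compatible \((J_\eta, R_\eta)\) has been fixed once and for all, and an off-the-shelf such family need not be \(df\)-equivariant on the nose, so a priori the square only commutes up to homotopy; I expect this to be the main (if minor) obstacle. It is handled by observing that the fibers of the forgetful map from ``(hyperplane field, compatible complex structure, coorienting line field)'' down to ``(hyperplane field, conformal symplectic structure, coorientation)'' are contractible, so any two admissible choices yield homotopic maps and the diagram commutes up to a preferred homotopy; alternatively one makes the square commute strictly by producing \(J_\eta\) from a polar decomposition against a Riemannian metric and taking \(R_\eta\) to be the Reeb field, and then, along the diffeomorphism orbit, using the pulled-back metric \(f^{\star}g\) and the tautological form \(f^{\star}\alpha_{OT}\), which renders the whole construction \(df\)-equivariant. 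Either way the lemma — that \(\D_0(M) \to \Co(M) \to \text{AlmCont}(M)\) factors through \(\text{Bun}(TM)\) as claimed — follows.
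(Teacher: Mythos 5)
Your proposal is correct, and at its core it is the same definitional unwinding as the paper's proof, but the two arguments put their weight in different places. The paper's proof treats commutativity of the square as immediate and spends all its effort on the point you largely skip: that the bottom map is well defined on \emph{all} of \(\text{Bun}(TM)\), i.e.\ for an arbitrary continuous \(f:M\to M\) and an arbitrary fiberwise isomorphism \(\delta f:TM\to f^{\star}TM\) (not just \((f,df)\) with \(f\) a diffeomorphism), the pulled-back triple \((f^{\star}\xi_{OT}, f^{\star}J, f^{\star}R)\) still lands in \(\text{AlmCont}(M)\) once \(\delta f\) is used to realize the pullback bundles inside \(TM\). Your verification of this in the special case \(\delta f = df\) is exactly the same computation, so nothing essential is missing, but for the lemma as stated you should say a word about the general \((f,\delta f)\). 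Conversely, you spend your effort on a point the paper glosses over: the map \(\Co(M)\to\text{AlmCont}(M)\) involves a choice of compatible \((J,R)\) for each contact structure, so the square a priori only commutes up to the contractible space of such choices; your observation that this yields commutativity up to a preferred homotopy (or strict commutativity after an equivariant polar-decomposition construction) is a genuine refinement, and it is all that is needed downstream since the lemma is only ever applied at the level of homotopy groups.
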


\begin{proof}
It is clear the diagram commutes as long as the bottom map is well defined, so to verify this, we just need to ensure that if \((f,\delta f) \in \text{Bun}(TM)\) then \((f,\delta f)^\star \xi_{OT}\) is still an almost contact structure on \(M\). First, \(f^{\star} \xi_{OT}\), \(f^\star R\) are always a hyperplane bundle and line bundle on \(M\) respectively, for any continuous function \(f:M \to M\). Also, the Whitney sum decomposition and the almost complex structure are naturally preserved by pullback. The only possible obstruction is that \(f^\star \xi_{OT}\), \(f^{\star} R\) are not necessarily isomorphic to sub-bundles of \(TM\) for an arbitrary function. However, we are given a fiberwise isomorphism \(\delta(f): TM \to f^\star TM\) which allows us to realize these pullbacks as sub-bundles of the tangent bundle, as required.
\end{proof}

Next, we need some results about how the diffeomorphism group of a disk glued into \(M\) maps to \(\Co(M)\). First, we can use a recent result in \cite{Ebert} Theorem \(1.4\) which says that the inclusion map 
\[\D_\partial(\bD^{2n+1}) \to \D_0(M)\]
is injective on rational homotopy in degrees \(k\) in the rational concordance range \(k \leq \phi^\bQ(\bD^{2n}) - 1\), \(k \neq 0\). So, if we can find something nontrivial in \(\pi_k\D_\partial(D^{2n+1}) \otimes \bQ\) that maps to zero in \(\pi_k\Co(M) \otimes \bQ\) in this range, then by the injectivity result we will have a nontrivial element of \(\pi_k \D_0(M) \otimes \bQ\) that maps to zero in \(\pi_k\Co(M) \otimes \bQ\), which will give us a nontrivial element of \(\pi_k\cC_0(M,\xi_{OT}) \otimes \bQ\) by exactness. 

\begin{lemma}\label{Disk-Diffeomorphisms-Preserve-Contact-Structures}
Let \(\alpha \in \pi_k \D_\partial(\bD^{2n+1}) \) such that \(\alpha \mapsto 0\) under the map 
\[\pi_k \D_\partial(\bD^{2n+1}) \to \pi_k \D_0(M) \to \pi_k\Co(M,\xi_{OT}) \to \pi_k \AC(M).\]
Then we also have that \(\alpha \mapsto 0\) under the map 
\[\pi_k \D_\partial(\bD^{2n+1}) \to \pi_k \D_0(M) \to \pi_k\Co(M,\xi_{OT}) .\]
\end{lemma}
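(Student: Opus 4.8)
The plan is to sidestep the space of almost contact structures on all of \(M\) and instead work with the semi-simplicial models \(||X_\bullet||\), \(||X_\bullet^c||\), \(||Y_\bullet||\), \(||Y_\bullet^c||\) of Section \ref{sec:four}, exploiting one extra piece of structure that a ``diffeomorphism-induced'' family of contact structures carries, namely a canonical continuous family of overtwisted disks. Since, as shown in the proof of Theorem \ref{Overtwisted-Group} (and that step does not need the range \(1 \leq k \leq 2n\), only the weak equivalences feeding into it), the comparison maps between these four spaces and the forgetful maps down to \(\Co^{OT}(M)\) and \(\AC(M)\) are all weak equivalences in the relevant spots, a class that vanishes in \(\pi_k\AC(M)\) can be pulled back to \(\pi_k||X_\bullet||\) and then pushed forward to \(\pi_k\Co^{OT}(M)\).

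In detail: let \(\phi \in \pi_k\D_0(M)\) be the image of \(\alpha\), and choose a representative \(f\colon S^k \to \D_0(M)\) with \(f_{t_0} = \mathrm{id}_M\) at the basepoint (possible since \(\D_\partial(\bD^{2n+1}) \to \D_0(M)\) sends \(\mathrm{id}\) to \(\mathrm{id}\)). Put \(\xi_t := f_t^{\star}\xi_{OT}\), so that \(\bar\alpha\colon S^k \to \Co^{OT}(M)\), \(t \mapsto \xi_t\), is a based map representing the image of \(\alpha\) under \(\pi_k\D_\partial(\bD^{2n+1}) \to \pi_k\D_0(M) \to \pi_k\Co^{OT}(M)\); here I use that \(g \mapsto g^{\star}\xi_{OT}\) lands in \(\Co^{OT}(M)\) because the pullback of an overtwisted contact structure by a diffeomorphism is again overtwisted, so this map factors as \(\D_0(M) \to \Co^{OT}(M) \hookrightarrow \Co(M,\xi_{OT}) \hookrightarrow \AC(M)\) and the hypothesis says precisely that \([\bar\alpha] \mapsto 0\) in \(\pi_k\AC(M)\). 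Now fix an overtwisted disk \((\Delta_0,\gamma_0)\) for \(\xi_{OT}\) (one exists since \((M,\xi_{OT})\) is overtwisted) and set \(\Delta_t := f_t^{-1}(\Delta_0)\). As \(f_t\colon (M,\xi_t) \to (M,\xi_{OT})\) is a contactomorphism, \(\Delta_t\) is an overtwisted disk for \(\xi_t\), and \(t \mapsto \Delta_t\) is continuous into \(\mathrm{Emb}(\bD^{2n},M)\); hence \((\bar\alpha, \Delta_\bullet, 1)\) is a based map \(S^k \to ||X_\bullet^c||\), based at \((\xi_{OT},\Delta_0,1)\) and lifting \(\bar\alpha\), formed exactly as in the construction recalled just before Lemma \ref{Foliated_h_Principle}.

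Then I would chase the diagram. Because \(||X_\bullet|| \to ||X_\bullet^c||\) is a weak equivalence, there is \(\beta \in \pi_k||X_\bullet||\) mapping to \([(\bar\alpha, \Delta_\bullet, 1)]\), and by commutativity of the square of forgetful maps \(\beta\) maps to \([\bar\alpha]\) in \(\pi_k\Co^{OT}(M)\). By hypothesis \([\bar\alpha]\) dies in \(\pi_k\AC(M)\), so \(\beta\) maps to \(0\) under the composite \(\pi_k||X_\bullet|| \to \pi_k\Co^{OT}(M) \to \pi_k\AC(M)\). But that composite is a weak equivalence, since it agrees with \(||X_\bullet|| \to ||Y_\bullet|| \to \AC(M)\), the first arrow being a weak equivalence by the relative \(h\)-principle of \cite{Borman} (as used in Theorem \ref{Improved-Contact-Geometry}) and the second by Theorem \ref{Section-Space-Weak-Equivalence}. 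Hence \(\beta = 0\), so \([\bar\alpha] = 0\) in \(\pi_k\Co^{OT}(M)\), and therefore also in \(\pi_k\Co(M,\xi_{OT})\) via the factorization above, which is the asserted vanishing.

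The one step requiring genuine care, rather than bookkeeping, is the claim that \(\Delta_t = f_t^{-1}(\Delta_0)\) really is an overtwisted disk for \(\xi_t\) — continuously in \(t\), and respecting the piecewise-smooth model structure flagged in the remark after Theorem \ref{Overtwisted-Group}. This is the statement that a contactomorphism carries an overtwistedness certificate to an overtwistedness certificate; it follows directly from Definition 3.6 of \cite{Borman}, but it is the single point where the precise definition of an overtwisted disk is invoked rather than used as a black box. Everything else is a diagram chase through weak equivalences already established.
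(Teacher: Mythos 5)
Your argument is correct and non-circular, but it takes a genuinely different and heavier route than the paper's. The paper's proof exploits the fact that \(\alpha\) comes from \(\D_\partial(\bD^{2n+1})\): the induced diffeomorphisms of \(M\) are supported in an embedded ball \(D\) chosen disjoint from a neighborhood of a fixed overtwisted disk \(\Delta\) for \(\xi_{OT}\), so \(f^{\star}\xi_{OT}\) agrees with \(\xi_{OT}\) near \(\Delta\) and the single \emph{constant} disk \(\Delta\) certifies overtwistedness for the entire family. Hence \(\D_\partial(\bD^{2n+1}) \to \Co(M)\) factors through \(||X_\bullet||\) directly, via \(f \mapsto (f^{\star}\xi_{OT},\Delta,1)\), and the only input needed is that \(||X_\bullet|| \to \AC(M)\) is a weak equivalence (Theorem \ref{Section-Space-Weak-Equivalence} plus the relative \(h\)-principle). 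You instead let the certificate move, \(\Delta_t = f_t^{-1}(\Delta_0)\), which forces you into the continuous model \(||X_\bullet^c||\) and makes you invoke the weak equivalence \(||X_\bullet|| \to ||X_\bullet^c||\), i.e.\ Lemmas \ref{Foliated_h_Principle} and \ref{Continuous-Section-Space-Weak-Equivalence} on top of what the paper uses; you are right that none of this depends on the range \(1 \leq k \leq 2n\). What your version buys is generality: since it never uses compact support, it proves the vanishing for any class in \(\pi_k\D_0(M)\) (indeed any class in \(\pi_k\Co^{OT}(M)\)) whose image in \(\pi_k\AC(M)\) vanishes, not merely those coming from \(\D_\partial(\bD^{2n+1})\). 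The one step you flag --- that a diffeomorphism carries an overtwistedness certificate for \(\xi_{OT}\) to one for the pulled-back structure, continuously in \(t\) and compatibly with the piecewise-smooth model --- is indeed the only geometric input and does hold, since the model germ transports along the contactomorphism \(f_t\colon (M,\xi_t) \to (M,\xi_{OT})\); the paper's choice of a constant disk avoids even this check.
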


\begin{proof}
Let \( \D_\partial(\bD^{2n+1}) \to \Co(M)\) be given by composing through the diffeomorphisms of \(M\). Now, let \(\Delta\) be an overtwisted disk for \(\xi_{OT}\), and let \(D \simeq \bD^{2n+1}\) be an embedded disk in \(M\) disjoint from a neighborhood of \(\Delta\). Then since we get a diffeomorphism of \(M\) by extending a diffeomorphism of the disk by the identity, we have that \(\Delta\) is overtwisted for \(f^\star \xi_{OT} \) for all \(f \in \D_\partial(D)\). So, if we let 
\[||X_\bullet|| = ||\coprod_{p+1 \text{ disjoint overtwisted disks } (\Delta_0, \dots , \Delta_p)} \Co^{OT}(M \text{ rel } \Delta_0, \dots, \Delta_p)||\]
Then we have that the map \( \D_\partial(\bD^{2n+1}) \to \Co(M)\) factors through \(||X_\bullet||\), where \(f \mapsto (f^\star \xi_{OT}, \Delta, 1)\), and of course the map \(||X_\bullet|| \to \Co(M)\) just comes from forgetting the overtwisted disks. So, we have the following commutative diagram:
\[\begin{tikzcd}
\text{Diff}_\partial(\bD^{2n+1})  \arrow[rr] \arrow[d] &  & \text{Cont}(M) \arrow[d] \\
||||X_\bullet|| \arrow[rr] \arrow[rru]                 &  & \text{AlmCont}(M)       
\end{tikzcd}\]
But, as we saw in a previous section the map \(||X_\bullet|| \to \AC(M)\) is a weak equivalence, so indeed anything mapping to zero in the homotopy of \(\AC(M)\) must map to zero in the homotopy of \(||X_\bullet||\) and thus in \(\Co(M)\) as well. 
\end{proof}

Now, we have enough to prove the following lemma:
\begin{lemma}\label{Disk-Diffeomorphisms-Act-Rationally-Null-Homotopically}
The map \(\D_\partial(\bD^{2n+1}) \to \D_0(M) \to \Co(M)\) is trivial on rational homotopy groups.
\end{lemma}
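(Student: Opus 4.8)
The plan is to use the two lemmas just proved to reduce the statement to the rational triviality of the derivative map of disc diffeomorphisms, which then holds for dimension--parity reasons.

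First I would invoke Lemma~\ref{Disk-Diffeomorphisms-Preserve-Contact-Structures}: it is enough to show that the composite \(\D_\partial(\bD^{2n+1}) \to \D_0(M) \to \Co(M,\xi_{OT}) \to \AC(M)\) is trivial on rational homotopy. By Lemma~\ref{Diffeomorphism-Action-Factors} this composite factors through \(\text{Bun}(TM)\) via \(f \mapsto (f,df)\). Fix an embedded disc \(D \cong \bD^{2n+1} \subset M\) containing the support of every diffeomorphism coming from \(\D_\partial(\bD^{2n+1})\); then \((f,df)\) is the identity outside \(D\), so the map factors through the subspace \(\text{Bun}_\partial(TD)\) of automorphisms of \(TD\) that are the identity near \(\partial D\). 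Trivialising \(TD\) and using that \(\Map_\partial(D,D)\) is contractible, \(\text{Bun}_\partial(TD) \simeq \Map_\partial(D,GL_{2n+1}(\bR)) \simeq \Omega^{2n+1}SO(2n+1)\), and under this equivalence \(\D_\partial(\bD^{2n+1}) \to \text{Bun}_\partial(TD)\) becomes the derivative map \(d\colon \D_\partial(\bD^{2n+1}) \to \Omega^{2n+1}SO(2n+1)\). Similarly \(\text{Bun}(TM) \to \AC(M)\) carries \(\text{Bun}_\partial(TD)\) into the space of almost contact structures agreeing with \(\xi_{OT}\) outside \(D\), which --- by a fibre-sequence argument as in Lemma~\ref{Germs-Are-Irrelevant}, using contractibility of \(D\) so that \(E|_D\) is trivial --- is weakly equivalent to \(\Omega^{2n+1}(SO(2n+1)/U(n))\), the induced map \(\Omega^{2n+1}SO(2n+1) \to \Omega^{2n+1}(SO(2n+1)/U(n))\) being \(\Omega^{2n+1}\) of the natural projection.

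It therefore remains to show the derivative map \(d\colon \D_\partial(\bD^{2n+1}) \to \Omega^{2n+1}SO(2n+1)\) is rationally trivial, since then so is its post-composition with \(\Omega^{2n+1}(\text{proj})\), hence the map into \(\AC(M)\). On \(\pi_k \otimes \bQ\) the target is \(\pi_{k+2n+1}(SO(2n+1)) \otimes \bQ\), and as \(SO(2n+1)\) is rationally the product \(\prod_{i=1}^{n} S^{4i-1}\) of odd spheres this is nonzero only when \(k+2n+1 \equiv 3 \pmod 4\), i.e. when \(k\) is even. On the other hand, in the concordance stable range --- exactly the range in which Theorem B is stated --- the Farrell--Hsiang computation shows \(\pi_k(\D_\partial(\bD^{2n+1})) \otimes \bQ\) is concentrated in the odd degrees \(k \equiv 3 \pmod 4\). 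These parities never coincide, so \(d\) kills all the rational homotopy of \(\D_\partial(\bD^{2n+1})\) in this range, completing the proof.

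The hard part is this last step, together with the map identifications in the middle paragraph: one has to verify that passing to the support disc \(D\) genuinely recovers the honest derivative (up to harmless trivialisation choices) and that the comparison with \(\AC(M)\) is exactly \(\Omega^{2n+1}\) of the projection \(SO(2n+1) \to SO(2n+1)/U(n)\), and --- more seriously --- one leans on the deep computation of \(\pi_*(\D_\partial(\bD^{2n+1})) \otimes \bQ\). Because that computation is only known in the concordance stable range, the parity argument gives the lemma precisely in the range needed for Theorem B; pushing it to all rational homotopy groups would instead require a direct proof that the derivative map \(\D_\partial(\bD^{2n+1}) \to \Omega^{2n+1}SO(2n+1)\) (or its composite to \(\Omega^{2n+1}(SO(2n+1)/U(n))\)) is rationally null in every degree.
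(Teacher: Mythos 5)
Your skeleton is exactly the paper's: reduce via Lemma \ref{Disk-Diffeomorphisms-Preserve-Contact-Structures} to the composite into \(\AC(M)\), factor that composite through \(\text{Bun}(TM)\) via Lemma \ref{Diffeomorphism-Action-Factors}, identify the restriction to a support disc with the derivative map \(d\colon \D_\partial(\bD^{2n+1}) \to \Omega^{2n+1}SO(2n+1)\), and kill everything there rationally. Your middle paragraph just makes explicit the identifications the paper asserts when it labels the left vertical map ``the derivative,'' and those identifications are correct.

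The one substantive divergence is the last step, and it matters for the statement as written. The paper quotes the fact (citing Crowley, p.~9) that \(d\) is zero on rational homotopy groups in \emph{every} degree, which is what makes the lemma unconditional on \(k\). You instead deduce it from a parity comparison: \(\pi_{k+2n+1}(SO(2n+1))\otimes\bQ\) lives in even \(k\), while by Farrell--Hsiang \(\pi_k\D_\partial(\bD^{2n+1})\otimes\bQ\) lives in \(k\equiv 3 \pmod 4\). The arithmetic is right, but Farrell--Hsiang is only available in the concordance stable range, so your argument proves a weaker lemma than the one stated (you acknowledge this). That restricted version does still suffice for Corollary \ref{Infinite-Cyclic-Subgroups-of-Contactomorphisms}/Theorem B, though you should check that the range in which the Farrell--Hsiang computation for \(\bD^{2n+1}\) is valid actually contains the range \(k \leq \phi^{\bQ}(\bD^{2n})-1\) coming from Ebert's injectivity result, rather than asserting they are ``exactly'' the same. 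To recover the lemma in all degrees, replace the parity argument with the cited unconditional fact that the derivative map \(\D_\partial(\bD^{2n+1}) \to \Omega^{2n+1}SO(2n+1)\) is rationally trivial; nothing else in your write-up needs to change.
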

\begin{proof}
From Lemma \ref{Diffeomorphism-Action-Factors}, we can factor the map through bundle isomorphisms, as follows:
\[\begin{tikzcd}
\text{Diff}_\partial(\bD^{2n+1}) \arrow[rr] \arrow[d] &  & \text{Diff}_0(M) \arrow[rr] \arrow[d] &  & \text{Cont}(M) \arrow[d] \\
\text{Bun}_\partial(\bD^{2n+1}) \arrow[rr]            &  & \text{Bun}(TM) \arrow[rr]             &  & \text{AlmCont}(M)       
\end{tikzcd}
\]
where the left vertical map is the derivative 
\(d: \D_\partial(\bD^{2n+1}) \to \Omega^{2n+1} SO(2n+1)\). But, this map is zero on rational homotopy groups, see for example \cite{Crowley} page nine. So, the composition through the bottom of the diagram to \(\AC(M)\) is zero on rational homotopy. So, since the diagram is commutative, we can go through the top, and use Lemma \ref{Disk-Diffeomorphisms-Preserve-Contact-Structures} to conclude \(\D_\partial(\bD^{2n+1}) \to \D_0(M) \to \Co(M)\) is trivial on rational homotopy groups, as required. 
\end{proof}
\begin{corollary}\label{Infinite-Cyclic-Subgroups-of-Contactomorphisms}
If \((M,\xi_{OT})\) is a closed, cooriented, overtwisted contact manifold of dimension \(2n+1\), then \(\pi_k\cC_0(M,\xi_{OT})\) contains an infinite cyclic subgroup whenever \(\pi_k\D_\partial(\bD^{2n+1}) \otimes \bQ \neq 0\), for \(k \leq  \phi^\bQ(\bD^{2n})-1\), \(k \neq 0\). 
\end{corollary}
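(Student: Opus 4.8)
The plan is to read off the statement from the fiber sequence $\cC_0(M,\xi_{OT}) \to \D_0(M) \to \Co(M)$ of \cite{Giroux}, using as the two nontrivial inputs the rigidity theorem $\D_\partial(\bD^{2n+1}) \to \D_0(M)$ being injective on rational homotopy in the range $k \le \phi^\bQ(\bD^{2n})-1$, $k\neq 0$ (\cite{Ebert}, Theorem 1.4), and Lemma \ref{Disk-Diffeomorphisms-Act-Rationally-Null-Homotopically}, which says the composite $\D_\partial(\bD^{2n+1}) \to \D_0(M) \to \Co(M)$ is trivial on rational homotopy groups. So the proof should be a diagram chase through the long exact homotopy sequence of the fibration. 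First I would fix $k$ with $k\neq 0$ and $k \le \phi^\bQ(\bD^{2n})-1$ and, using the hypothesis $\pi_k\D_\partial(\bD^{2n+1})\otimes\bQ \neq 0$, pick an element $\alpha \in \pi_k\D_\partial(\bD^{2n+1})$ of infinite order.

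Next I would push $\alpha$ forward to $\bar\alpha \in \pi_k\D_0(M)$ along the inclusion. By \cite{Ebert} the map $\pi_k\D_\partial(\bD^{2n+1})\otimes\bQ \to \pi_k\D_0(M)\otimes\bQ$ is injective in this range, so $\bar\alpha$ still has infinite order. By Lemma \ref{Disk-Diffeomorphisms-Act-Rationally-Null-Homotopically} the image of $\bar\alpha$ in $\pi_k\Co(M)$ dies in $\pi_k\Co(M)\otimes\bQ$, hence is a torsion element, so some multiple $N\bar\alpha$ ($N\ge 1$) maps to zero in $\pi_k\Co(M)$ while still having infinite order. Exactness of the long exact sequence at $\pi_k\D_0(M)$ then produces $\beta \in \pi_k\cC_0(M,\xi_{OT})$ mapping to $N\bar\alpha$; if $\beta$ had finite order $m$ then $mN\bar\alpha$ would be the image of $m\beta = 0$, contradicting that $N\bar\alpha$ has infinite order. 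Thus $\langle\beta\rangle \cong \bZ$ is the desired infinite cyclic subgroup.

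I do not expect a serious obstacle here: the genuine content is already packaged in Lemmas \ref{Diffeomorphism-Action-Factors}--\ref{Disk-Diffeomorphisms-Act-Rationally-Null-Homotopically} and in the cited concordance-range injectivity of Ebert, so what remains is bookkeeping. The only points that need a little care are the passage from ``rationally trivial image'' to ``torsion image'' (so that an honest multiple lifts), and checking that the lift has infinite order rather than merely being nonzero, which is the short exactness argument above. For $k=1$ one simply interprets the hypothesis and conclusion directly in terms of elements of infinite order; the long exact sequence of the fibration is still exact as a sequence of groups in that degree, so the argument is unchanged.
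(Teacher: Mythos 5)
Your proposal is correct and follows essentially the same route as the paper: combine Ebert's rational injectivity of $\pi_k\D_\partial(\bD^{2n+1}) \to \pi_k\D_0(M)$ with Lemma \ref{Disk-Diffeomorphisms-Act-Rationally-Null-Homotopically} and chase the long exact sequence of the fibration $\cC_0(M,\xi_{OT}) \to \D_0(M) \to \Co(M)$. The paper simply tensors the exact sequence with $\bQ$ (exactness is preserved by flatness), whereas you work integrally by passing to a multiple that genuinely dies in $\pi_k\Co(M)$; this is the same argument with slightly more explicit bookkeeping.
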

\begin{proof}
By applying the previous lemma and the injectivity result of \cite{Ebert}, every nonzero element of \(\pi_k\D_\partial(\bD^{2n+1}) \otimes \bQ\) maps to \(0\) in \(\pi_k \Co^{OT}(M) \otimes \bQ\) and so by exactness, must come from something  nonzero in \(\pi_k \cC_0(M,\xi_{OT}) \otimes \bQ\).
\end{proof}

\section{Further Applications}
\label{sec:six}
Another immediate application to Theorem \ref{Relative-H-Principle} appears in Engel geometry, where it was recently shown that there is a notion of overtwistedness parallel to contact overtwistedness, and one still gets an \(h\)-principle with a fixed overtwisted disk, see \cite{del_Pino} (Theorem 1.1 and Corollary 1.2). All of the relevent properties of contact overtwistedness also apply to Engel overtwistedness; The collection of all overtwisted engel disks is still sufficiently separated, and Theorem 1.1 of \cite{del_Pino} gives a strong enough relative \(h\)-principle that we can get an \(h\)-principle for any number of fixed overtwisted disks. We can apply Theorem \ref{Relative-H-Principle} to conclude that for a \(4\)-manifold \(M\), \(\cE(M) \to \cE^f(M)\) admits a section up to homotopy, where \(\cE(M), \cE^f(M)\) are the spaces of Engel and formal Engel structures on \(M\), respectively. This shows that \(\pi_k\cE^f(M)\) is a subgroup of \(\pi_k\cE(M)\) for all \(k\) via the map from the semi-simplicial realization. Furthermore, using the foliated version of this \(h\)-principle from \cite{del_Pino} Theorem 6.25, this subgroup agrees with the subgroup found using a certificate of overtwistedness in degrees \(k \leq 3\) in \cite{del_Pino}, using the same arguments used in Theorem \ref{Overtwisted-Group}. It was already known that the map \(\cE(M) \to \cE^f(M)\) is surjective on homotopy groups by \cite{Perez}, and one of the main results of \cite{Casals} shows \(\pi_k\cE^f(M)\) is a subgroup of \(\pi_k\cE(M)\) for all \(k\). The natural question this raises is whether the subgroup of \(\pi_k\cE(M)\) found in \cite{Casals} using loose Engel structures is the same as the subgroup one gets using semi-simplicial realization via overtwisted disks. Understanding this may help to understand how loose and overtwisted Engel structures interact, which is currently poorly understood.

\bibliographystyle{amsalpha}
\bibliography{./refs}

\end{document}